\theoremstyle{plain}
\newtheorem{thm}{Theorem}[section]
\newtheorem{lem}[thm]{Lemma}
\newtheorem{obs}[thm]{Observation}
\newtheorem{prop}[thm]{Proposition}
\newtheorem{cor}[thm]{Corollary}
\newtheorem*{claim*}{Claim}
\theoremstyle{definition}
\newtheorem{defi}[thm]{Definition}
\newtheorem*{ack}{Acknowledgements}
\newtheorem*{keywords}{Key words and phrases}
\newtheorem*{subject}{Mathematical subject classification (2020)}
\theoremstyle{remark}
\newcommand{\R}{\mathbb{R}}
\newcommand{\C}{\mathbb{C}}
\newcommand{\s}{\mathbb{S}}
\newcommand{\supp}{\textnormal{supp}}
\newcommand{\Pt}{\textnormal{Pt}}
\newcommand{\const}{\textnormal{const}}
\newcommand{\alg}{\textnormal{alg}}
\newcommand{\sym}{\textnormal{sym}}
\newcommand{\rot}{\textnormal{rot}}
\newcommand{\id}{\textnormal{id}}
\newcommand{\SO}{\mathrm{SO}}
\newcommand{\Or}{\mathrm{O}}
\newcommand{\SU}{\mathrm{SU}}
\newcommand{\U}{\mathrm{U}}
\newcommand{\implication}[2]{
  \mbox{$\text{#1}\implies\text{#2}$}
  \enspace\ignorespaces
}
\newcommand{\north}{e}
\newcommand{\south}{-e}
\title{Algebraic homotopy classes}
\author{Juliusz Banecki}
\date{}
\begin{document}
\maketitle

\begin{abstract}
We prove several positive results regarding representation of homotopy classes of spheres and algebraic groups by regular mappings. Most importantly we show that every mapping from a sphere to an orthogonal or a unitary group is homotopic to a regular one. Furthermore we prove that algebraic homotopy classes of spheres form a subgroup of the homotopy group, and that a similar result holds also for cohomotopy groups of arbitrary varieties.
\end{abstract}

\begin{keywords}
Real algebraic variety, regular map, homotopy group, homogeneous space, unit sphere, linear algebraic group.
\end{keywords}

\begin{subject}
14P05, 14P25.
\end{subject}

\section{Introduction}\label{introduction}
In the paper we deal with \emph{real affine varieties}, defined as in \cite{Mangolte} or \cite{reaL_algebraic_geometry}. Note that in this sense the real projective space $\mathbb{P}^n(\R)$ is an affine variety. Morphisms of such varieties are called \emph{regular maps}.

The paper is concerned with the question whether every continous mapping between spheres is homotopic to a regular one. It has been open since the 1980s, and although many researchers have devoted their attention to it, there has not been much progress in finding a general solution. Examples of papers exploring the topic include \cite{Bochnak_Kucharz_1987a},\cite{Bochnak_Kucharz_1987b},\cite{Peng},\cite{Wood} and more; together they cover quite a lot of cases for which the answer to the question is known to be affirmative, as detailed in Section 5 of \cite{real_homogeneous}. Unfortunately, the existing approaches are often specialized and do not significantly contribute to the development of a theory capable of tackling the problem in full generality. The aim of the current paper is to introduce several new constructions, which are rather general and substantially improve the scope of homotopy classes which are known to be represented by regular mappings. Before presenting our results we need to introduce some notation.

We always interpret the sphere $\s^n$ as being embedded in $\R^{n+1}$ as the standard unit sphere, carrying the usual real algebraic structure. For any given $n$ we denote by $\north$ the point $(1,0,\dots,0)\in\s^n$ of the sphere. For a given mapping $f:X\rightarrow \s^n$ from a topological space $X$ into a sphere we denote by $\supp(f)$ the set $X\backslash f^{-1}(e)$.

For a pointed topological space $(X,a)$ we denote by $\pi_n(X,a)$ the set of homotopy classes of mappings from $(\s^n,e)$ to $(X,a)$ equipped with the natural group structure. If it is clear which point of $X$ is highlighted, then we will just write $\pi_n(X)$ instead. If $X$ additionally has the structure of a non-singular affine variety, then we denote by $\pi_n^\alg(X)\subset \pi_n(X)$ the set of \emph{algebraic homotopy classes}, i.e. those homotopy classes which admit regular representatives. 

Similarly, for a topological space $X$ we denote by $\pi^n(X)$ the set of homotopy classes of mappings from $X$ to $\s^n$. The set forms a group, called the $n$-th cohomotopy group of $X$, whenever $X$ is a compact smooth manifold of dimension at most $2n-2$ (defined as in \cite[Chapter IX, (5.6)]{Kosinski}). If $X$ has the structure of a non-singular affine variety, then we denote by $\pi^n_\alg(X)\subset \pi^n(X)$ the set of \emph{algebraic cohomotopy classes}, i.e. those classes which admit regular representatives.

First we turn our attention to the study of how the group structure in homotopy and cohomotopy behaves on algebraic classes, arriving at the following results:

\begin{thm}\label{thm_add_hom}
For every $n$ and $k$, the subset $\pi_n^\alg(\s^k)$ is a subgroup of the homotopy group $\pi_n(\s^k)$.
\end{thm}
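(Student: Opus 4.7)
My plan is to verify the three subgroup axioms. The neutral element is the class of the constant map at $e$, which is trivially regular. For closure under inverses, the reflection $r\colon\s^n\to\s^n$, $r(x_1,\ldots,x_{n+1}):=(-x_1,x_2,\ldots,x_{n+1})$, is linear (hence regular) and of degree $-1$; since a degree-$d$ self-map of $\s^n$ acts by multiplication by $d$ on $\pi_n(\s^k)$, if $f$ is a regular representative of $[f]$ then $f\circ r$ is a regular representative of $-[f]$.

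Closure under addition is the heart of the proof: given regular $f,g\colon\s^n\to\s^k$, one must exhibit a regular $h$ with $[h]=[f]+[g]$. The standard topological construction via the pinch map $\s^n\to\s^n\vee\s^n$ has no direct algebraic counterpart --- the wedge is a singular variety, and by irreducibility of $\s^n$ any regular self-map constant on a Euclidean-open subset must be globally constant, so one cannot regularly homotope $f$ to take the value $e$ on an entire hemisphere, ruling out naive support-separation constructions.

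The approach I would try is to first regularly normalize $f,g$ so that $f(e)=g(e)=e$ and then assemble $h$ via an explicit rational formula in the coordinates of $f,g$. A natural candidate is to set $\phi(x):=f(x)+g(x)-e\in\R^{k+1}$ (which is regular) and take $h(x):=\phi(x)/\|\phi(x)\|$; this coincides with $g(x)$ where $f(x)=e$ and with $f(x)$ where $g(x)=e$, so on regions where $f,g$ do not essentially interact, $h$ is their ``disjoint union'' and one expects $[h]=[f]+[g]$. The obstacle to $h$ being regular is that $\|\phi\|^2$ need not be a perfect square in the ring of regular functions on $\s^n$, so the normalization is only analytic and not algebraic in general. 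The principal difficulty --- the main obstacle I would expect --- is to produce regular representatives $F\simeq f$ and $G\simeq g$ making such a formula (or a clever variant) simultaneously regular and of the correct homotopy class; a plausible route is to construct an auxiliary non-singular algebraic variety $Y$ equipped with regular maps $Y\to\s^k$ realizing the fold of $F,G$ and $\s^n\to Y$ factoring the pinch map up to homotopy, effectively a ``regular resolution'' of $\s^n\vee\s^n$ substituting for the missing regular pinch map.
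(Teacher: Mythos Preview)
Your handling of the neutral element and inverses is essentially fine (minor slip: your reflection $r$ sends $e=(1,0,\ldots,0)$ to $-e$, so it does not preserve the basepoint; use instead a reflection in a hyperplane through $e$, e.g.\ flip the last coordinate). The closure-under-addition step, however, has a genuine gap: you correctly diagnose the obstacle but have not found the two ideas that resolve it.

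The first is an explicit partial addition on the \emph{target}: set $a\oplus b:=\pi^{-1}(\pi(a)+\pi(b))$, where $\pi:\s^k\setminus\{-e\}\to\R^k$ is stereographic projection. A direct computation shows this extends to a \emph{regular} map $\oplus:(\s^k\times\s^k)\setminus\{(-e,-e)\}\to\s^k$; the stereographic formulas build in the normalization that made your $\phi/\|\phi\|$ non-algebraic. The second, and decisive, idea is how to deploy it. Choose \emph{continuous} representatives $f_1,f_2$ of the given algebraic classes, each constantly equal to $e$ on one hemisphere, so that $f_1\oplus f_2$ visibly represents $\alpha_1+\alpha_2$. These $f_i$ are not regular --- exactly the obstruction you identified --- but each is \emph{homotopic} to a regular map, and by the Bochnak--Kucharz theorem (Theorem~\ref{kucharz-bochnak}) this upgrades to \emph{approximability} by regular maps. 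Regular approximations $\widetilde f_i$ sufficiently close to $f_i$ keep $(\widetilde f_1(x),\widetilde f_2(x))$ bounded away from $(-e,-e)$, so $\widetilde f_1\oplus\widetilde f_2$ is a well-defined regular map close to $f_1\oplus f_2$, hence in the same homotopy class. This approximation step is what dissolves the dichotomy you ran into: one never needs ``regular'' and ``constant on a hemisphere'' simultaneously, only each separately, with the partial operation $\oplus$ supplying the glue.
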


\begin{thm}\label{thm_add_cohom}
Let $k>0$ and let $X$ be a non-singular compact real affine variety of dimension at most $2k-2$. Then the subset $\pi^k_\alg(X)$ is a subgroup of the cohomotopy group $\pi^k(X)$.
\end{thm}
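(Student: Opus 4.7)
The plan is to verify the three subgroup axioms. The identity class is represented by the constant map $X \to \{e\}$, which is regular. For inverses, note that the reflection $\sigma : \s^k \to \s^k$ defined by $\sigma(x_0, x_1, x_2, \ldots, x_k) := (x_0, -x_1, x_2, \ldots, x_k)$ is regular, fixes $e$, and has degree $-1$; consequently, if $f : X \to \s^k$ is a regular representative of $[f] \in \pi^k_\alg(X)$, then $\sigma \circ f$ is a regular representative of $-[f]$.

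Closure under addition is the main step. I would use the Pontryagin--Thom description of $\pi^k(X)$ (valid for $\dim X \leq 2k-2$), under which a cohomotopy class is encoded by a framed codimension-$k$ submanifold of $X$ and the group operation is disjoint union. Given regular $f, g : X \to \s^k$, composing with small generic regular rotations of $\s^k$ fixing $e$ ensures $f, g$ are transverse to $-e$, so $M_f := f^{-1}(-e)$ and $M_g := g^{-1}(-e)$ are nonsingular algebraic subvarieties of dimension at most $k-2$ with algebraic framings read off from the last $k$ coordinate functions of $f$ and $g$. Since $\dim M_f + \dim M_g \leq 2k-4 < \dim X$, a further generic rotation (applied to $g$, say) achieves $M_f \cap M_g = \emptyset$.

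It remains to realize the framed pair $(M_f \sqcup M_g, \text{framings})$ by a single regular map $h : X \to \s^k$. Applying the real Nullstellensatz to the disjoint compact algebraic sets $M_f, M_g$, I obtain regular functions $\alpha, \beta : X \to \R$ with $\alpha + \beta \equiv 1$, $\alpha|_{M_f} = 0$, $\beta|_{M_g} = 0$. A natural interpolant is
\[
h(x) := \frac{\beta(x)^2 f(x) + \alpha(x)^2 g(x) + 2\alpha(x)\beta(x)\,e}{\bigl|\beta(x)^2 f(x) + \alpha(x)^2 g(x) + 2\alpha(x)\beta(x)\,e\bigr|},
\]
which, where defined, takes the value $-e$ precisely on $M_f \sqcup M_g$, restricts to the framings of $f$ and $g$ on the respective components (a short derivative computation: at $x_0 \in M_f$, $f_i(x_0) = 0$ and $\alpha(x_0) = 0$, so $dh_i|_{x_0} = df_i|_{x_0}$ for $i \geq 1$), and reduces to $e$ wherever $f(x) = g(x) = e$.

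The main obstacle is making the formula regular globally: the denominator vanishes on the algebraic subset cut out by the $k+1$ equations $\beta^2 f + \alpha^2 g + 2\alpha\beta e = 0$, which has expected codimension $k+1$ in $X$ and can be nonempty when $k \geq 3$. Controlling this bad locus --- for instance by generic perturbation of $\alpha, \beta$, by choosing a generic reference point in place of $e$ in the formula, or by iteratively modifying the framings of $M_f, M_g$ inside their algebraic bordism classes so as to empty the locus --- is the key technical difficulty, and is where I expect the dimension hypothesis $\dim X \leq 2k-2$ to play its decisive role.
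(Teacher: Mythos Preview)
Your handling of the identity and inverses is fine and matches the paper. The gap is exactly where you flag it: the denominator of your interpolant $h$ can vanish on a set of expected dimension $\dim X-(k+1)\le k-3$, which is nonnegative once $k\ge 3$, and none of the proposed fixes (perturbing $\alpha,\beta$, moving the reference point, altering the framings) is actually carried out or obviously works. So as written the addition step is incomplete.

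The paper avoids this difficulty altogether by not building an interpolant on $X$. Instead it constructs, once and for all, a regular ``addition'' map
\[
\oplus:\bigl(\s^k\times\s^k\bigr)\setminus\{(-e,-e)\}\longrightarrow \s^k,
\]
defined by stereographically projecting each factor from $-e$ to $\R^k$, adding the vectors, and projecting back. A direct calculation with the explicit formulas shows that the apparent singularities along $\{-e\}\times\s^k$ and $\s^k\times\{-e\}$ cancel, leaving $(-e,-e)$ as the unique indeterminacy point. Thus for any pair of maps $f_1,f_2:X\to\s^k$ that never hit $-e$ simultaneously, $f_1\oplus f_2$ is defined, regular if $f_1,f_2$ are, and represents $[f_1]+[f_2]$.

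The second idea you are missing is that the paper does \emph{not} stay in the regular category throughout. Starting from regular representatives of $\alpha_1,\alpha_2$, it first passes to \emph{continuous} representatives $f_1,f_2$ with $\supp f_1\cap\supp f_2=\emptyset$ (this is where $\dim X\le 2k-2$ is used), so $f_1\oplus f_2$ makes sense. Then, since $f_i$ is homotopic to a regular map, the approximation theorem (Theorem~\ref{kucharz-bochnak}) furnishes regular $\widetilde f_i$ close to $f_i$; closeness guarantees $(\widetilde f_1(x),\widetilde f_2(x))\neq(-e,-e)$ for all $x$, and $\widetilde f_1\oplus\widetilde f_2$ is the desired regular representative of $\alpha_1+\alpha_2$. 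Your attempt to keep $f,g$ regular and disjoin only the fibres over $-e$ is what forces the partition-of-unity formula and its attendant bad locus; the paper's use of $\oplus$ together with approximation replaces that whole mechanism by the single requirement of avoiding one point in $\s^k\times\s^k$.
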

Previously such statements were known to be true only for $k=1,3$ or $7$.

In the theory developed, for a given pair $(n,k)$, a certain subgroup $\tau_n(\s^k)$ of the algebraic homotopy group $\pi_n^\alg(\s^k)$ emerges naturally. We are able to prove that whenever $k<3n-2$ it is precisely equal to the kernel of the suspension homomorphism, hence getting:
\begin{thm}\label{thm_ker}
For $k<3n-2$ every element of the kernel of the suspension homomorphism $\Sigma:\pi_n(\s^k)\rightarrow \pi_{n+1}(\s^{k+1})$ is represented by a regular mapping.
\end{thm}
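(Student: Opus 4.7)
The strategy relies on the subgroup $\tau_n(\s^k) \subseteq \pi_n^\alg(\s^k)$ introduced earlier in the paper. By construction every element of $\tau_n(\s^k)$ is represented by a regular map, so Theorem \ref{thm_ker} reduces to the group-theoretic identity $\tau_n(\s^k) = \ker \Sigma$ in the range $k<3n-2$. One inclusion, $\tau_n(\s^k)\subseteq \ker\Sigma$, should come essentially for free from how $\tau_n$ is built: if its generators are Whitehead-product-type classes, they desuspend to zero by the classical identity $\Sigma[\alpha,\beta]=0$, reflecting the splitting $\Sigma(\s^p\times\s^q)\simeq \Sigma\s^p\vee\Sigma\s^q\vee\Sigma(\s^p\wedge\s^q)$.

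For the reverse inclusion $\ker\Sigma\subseteq \tau_n(\s^k)$ I would appeal to the James EHP sequence
\[
\pi_{n+2}(\s^{2k+1}) \xrightarrow{P} \pi_n(\s^k) \xrightarrow{\Sigma} \pi_{n+1}(\s^{k+1}) \xrightarrow{H} \pi_{n+1}(\s^{2k+1}),
\]
which in its exactness range identifies $\ker\Sigma$ with the image of the $P$ map (essentially Whitehead product with the identity). The hypothesis $k<3n-2$ should correspond to the range in which this identification is valid; the task then becomes realizing each class in $\im P$ by a regular map. The image of $P$ is generated by Whitehead products $[\iota_k,\alpha]$ with $\alpha\in\pi_{n-k+1}(\s^k)$, and the archetypal case, the Whitehead square $[\iota_k,\iota_k]:\s^{2k-1}\to\s^k$, is essentially the attaching map of the top cell of the non-singular affine variety $\s^k\times\s^k\subset\R^{2k+2}$ and admits an explicit regular description in polynomial coordinates. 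More general Whitehead products are assembled by pre- and post-composition of regular representatives with the algebraic fold map $\s^k\vee\s^k\to\s^k$. Theorem \ref{thm_add_hom} then closes off the argument by ensuring that sums of algebraic classes remain algebraic.

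The principal obstacle I anticipate is controlling the EHP sequence in the full asserted range $k<3n-2$, which goes well beyond the elementary Freudenthal stable region. At the prime $2$ and for even $k$, secondary compositions and higher James--Hopf invariants can in principle contribute to the kernel of suspension; ruling these out — or showing that they, too, are absorbed into $\tau_n$ — constitutes the technical heart of the proof. The explicit coordinate-level realization of Whitehead products, by contrast, is essentially a matter of writing down regular formulas on the product variety $\s^k\times\s^k$ and invoking the regular constructions already developed in earlier sections of the paper.
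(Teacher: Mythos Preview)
Your high-level strategy matches the paper's: reduce to the equality $\tau_n(\s^k)=\ker\Sigma$, with the reverse inclusion following from the fact that in the metastable range every element of $\ker\Sigma$ is a Whitehead product $[\alpha,\iota_k]$. The paper does precisely this, citing Whitehead's classical result (his Corollary~5.3) for that last step; your EHP formulation is the same statement, and your worry about ``secondary compositions and higher James--Hopf invariants'' in this range is unfounded---Whitehead's theorem handles the full range $n<3k-2$ directly, so no extra work is needed there.

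Where your proposal diverges, and where it has a real gap, is in how you propose to get from ``Whitehead product'' to ``regular map''. You want to write $[\iota_k,\iota_k]$ explicitly in polynomial coordinates and then assemble general $[\alpha,\iota_k]$ via an ``algebraic fold map $\s^k\vee\s^k\to\s^k$'' and pre-composition with regular representatives of $\alpha$. This does not work as stated: the wedge $\s^k\vee\s^k$ is singular, there is no regular model of the attaching map $\s^{p+q-1}\to\s^p\vee\s^q$ to compose with, and you have no regular representative of a general $\alpha\in\pi_{n-k+1}(\s^k)$ to feed in anyway (that is the very problem the paper is trying to solve). The paper avoids all of this by never attempting a direct regular formula for a Whitehead product. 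Instead, it observes (Observation preceding the cited Whitehead theorem) that for any Whitehead product the preimage of a regular value of a smooth representative splits, via the wedge decomposition, into two framed pieces each representing a product $[\alpha,\const]$ or $[\const,\beta]$---hence each null-cobordant. This is exactly the defining property of $\tau_n(\s^k)$ (split framed submanifolds with null-cobordant halves), and membership in $\tau_n$ then yields regularity through the $\oplus$-construction, not through any explicit polynomial formula for the Whitehead map itself.
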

The group $\tau_n(\s^k)$ also turns out to be useful in proving the following result:
\begin{thm}\label{k-odd}
Let $k$ be odd. Then $2\pi_n(\s^k)\subset \pi_n^\alg(\s^k)$ holds for all $n$.
\end{thm}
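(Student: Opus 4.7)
The plan combines the main result announced in the abstract (that every continuous map from a sphere to an orthogonal group is homotopic to a regular one) with an explicit algebraic construction on the target sphere. Define the regular map
\[
\mu\colon\s^k\to\Or(k+1),\qquad \mu(x) = 2xx^T-I,
\]
whose entries are quadratic polynomials in the coordinates of $x$, and for which the orthogonality relation $(\mu(x))^T\mu(x)=I$ is verified directly using $|x|=1$. Its composition with the standard regular projection $\pi\colon\Or(k+1)\to\s^k$, $A\mapsto A\north$, yields the regular self-map $\phi(x) = 2(x\cdot\north)x-\north$ of $\s^k$, which preserves the basepoint ($\phi(\north)=\north$) and satisfies $\phi(-x)=\phi(x)$, so $\phi$ factors through the quotient $\s^k\to\mathbb{P}^k(\R)$.

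I would then verify $\deg\phi=\pm 2$ when $k$ is odd. In that case $\mathbb{P}^k(\R)$ is orientable, the induced map $\bar\phi\colon\mathbb{P}^k(\R)\to\s^k$ has a single preimage $\bigl[(y+\north)/|y+\north|\bigr]$ over a generic $y\neq-\north$ and hence has degree $\pm 1$, while the double cover $\s^k\to\mathbb{P}^k(\R)$ contributes a factor of $\pm 2$. The parity of $k$ is essential here: for $k$ even, $\mathbb{P}^k(\R)$ is non-orientable and the same $\phi$ has degree $0$, so the construction would be vacuous.

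Given an arbitrary continuous $f\colon\s^n\to\s^k$, the composition $\mu\circ f\colon\s^n\to\Or(k+1)$ is continuous, so by the orthogonal-group theorem it is homotopic to a regular map $h\colon\s^n\to\Or(k+1)$. Then $\pi\circ h$ is a regular map (composition of regular maps) representing the class $[\phi\circ f]\in\pi_n^\alg(\s^k)$. The crucial point is that pre-composing with $f$ destroys the regularity of $\phi$, but routing through $\Or(k+1)$ — where one can regularize — and then composing with the always-regular projection $\pi$ bypasses this obstacle.

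The final step is to identify $[\phi\circ f]$ with $\pm 2[f]$, giving $2[f]\in\pi_n^\alg(\s^k)$ via Theorem~\ref{thm_add_hom}. In the stable range $n<2k-1$ this follows immediately, since the James-type identity $[\phi\circ f]=(\deg\phi)[f]+[\iota_k,\iota_k]\circ H(f)$ has a correction term that vanishes for dimensional reasons ($\pi_n(\s^{2k-1})=0$). In the unstable range the correction is $2$-torsion — graded commutativity of the Whitehead bracket forces $2[\iota_k,\iota_k]=0$ when $k$ is odd — and handling it is the main obstacle; I expect this step uses the subgroup $\tau_n(\s^k)$ introduced earlier in the paper, whose construction should ensure that such Whitehead-product corrections coming from algebraic ingredients remain algebraic.
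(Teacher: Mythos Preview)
Your overall architecture matches the paper's: produce a regular degree-$2$ self-map $\phi$ of $\s^k$ factoring through $\mathbb{P}^k(\R)$, show that $[\phi\circ f]$ is algebraic for arbitrary continuous $f$, and then use the group $\tau_n(\s^k)$ to absorb the discrepancy between $[\phi\circ f]$ and $2[f]$. Your anticipated final step is exactly right---the paper's Lemma~\ref{distributivity_lemma} says precisely that $([g]+[h])\circ[f]\equiv [g\circ f]+[h\circ f]\bmod\tau_n(\s^k)$, which with $g=h=\id$ gives $2[f]\in[\phi\circ f]+\tau_n(\s^k)\subset\pi_n^\alg(\s^k)$, bypassing any need to control higher Hilton--Hopf invariants explicitly.

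The genuine difference is in how you make $[\phi\circ f]$ algebraic. You route through $\Or(k+1)$ via $\mu(x)=2xx^\top-I$ and invoke Theorem~\ref{thm_groups} to regularize $\mu\circ f$, then project. The paper instead routes through the much softer target $\R^{k+1}\setminus\{0\}$: it writes $\phi$ as $\s^k\hookrightarrow\R^{k+1}\setminus\{0\}\to\mathbb{P}^k(\R)\to\s^k$, and regularizes the continuous composite $\s^n\to\R^{k+1}\setminus\{0\}$ by plain Stone--Weierstrass (polynomial approximation into an open subset of a vector space). This is strictly more elementary and, importantly, keeps Theorem~\ref{k-odd} logically independent of Theorem~\ref{thm_groups}; in the paper the former is proved in Section~3 while the latter is only established in Section~4. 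Your detour through $\Or(k+1)$ is valid (there is no circularity---Theorem~\ref{thm_groups} does not use Theorem~\ref{k-odd}), but it spends a major theorem where an elementary approximation suffices.
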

Previously, such a result was known to hold under the additional assumption that $n<2k-1$ \cite[Theorem 2]{Bochnak_Kucharz_1987b}.

We then turn our attention to study mappings from spheres to linear algebraic groups, arriving at the following:

\begin{thm}\label{thm_groups}
For every $n$ and $k$, every continuous mapping from $\s^n$ to one of the groups $\Or(k),\SO(k),\U(k)$ or $\SU(k)$ is homotopic to a regular one.
\end{thm}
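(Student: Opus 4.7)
The plan is to first reduce the statement to $\SO(k)$ and $\SU(k)$, and then to prove those cases by induction on $k$ via fibration arguments that exploit the earlier theorems of the paper.

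For the reduction: a based map $\mathbb{S}^n\to \Or(k)$ lands in the identity component $\SO(k)$, and an unbased one lands in one connected component which is transported to $\SO(k)$ by left-multiplication with a fixed reflection $\rho\in\Or(k)\setminus\SO(k)$ (regular). The formula $g\mapsto\bigl(g\cdot\operatorname{diag}((\det g)^{-1},1,\ldots,1),\,\det g\bigr)$ is a regular isomorphism $\U(k)\xrightarrow{\sim}\SU(k)\times\U(1)$ — note that $(\det g)^{-1}=\overline{\det g}$ is a regular function on $\U(k)$ — so $\U(k)$ reduces to the case of $\SU(k)$ and of $\U(1)=\mathbb{S}^1$; every continuous $\mathbb{S}^n\to\mathbb{S}^1$ is homotopic to a constant ($n\ge 2$) or to $z\mapsto z^d$ ($n=1$), both regular.

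Next I would use the general observation that for any real algebraic group $G$ the set $\pi_n^{\alg}(G)$ is a subgroup of $\pi_n(G)$: the pointwise product of based regular maps into $G$ is regular and, by the Eckmann--Hilton argument, represents the sum of homotopy classes, while $g\mapsto g^{-1}$ is regular. Hence it suffices to produce algebraic generators for $\pi_n(\SO(k))$ and $\pi_n(\SU(k))$. I would induct on $k$ using the regular principal bundles
\begin{equation*}
  \SO(k-1)\hookrightarrow \SO(k)\xrightarrow{p}\mathbb{S}^{k-1},\qquad \SU(k-1)\hookrightarrow \SU(k)\xrightarrow{q}\mathbb{S}^{2k-1}
\end{equation*}
given by evaluation at $\north$, with base cases $\SO(2)=\mathbb{S}^1$ and $\SU(1)=\{1\}$ immediate. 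Under the inductive hypothesis the image of $\pi_n(\SO(k-1))\to\pi_n(\SO(k))$ is algebraic, so by the long exact sequence the remaining task is to lift each element of $\operatorname{im}\,p_*=\ker\partial\subset\pi_n(\mathbb{S}^{k-1})$ to an algebraic class in $\pi_n(\SO(k))$, and analogously for $\SU$.

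The main obstacle is precisely this lifting step. It has two aspects: (a) the class $\alpha\in\ker\partial$ must itself admit a regular representative $\phi\colon\mathbb{S}^n\to\mathbb{S}^{k-1}$, and (b) some such $\phi$ must be promoted to a regular map $\mathbb{S}^n\to\SO(k)$ covering it. For (a) I would combine the three preceding theorems: Theorem~\ref{k-odd} gives $2\pi_n(\mathbb{S}^{2k-1})\subset\pi_n^{\alg}(\mathbb{S}^{2k-1})$, which is especially powerful for $\SU(k)$ where the base sphere is always odd-dimensional; Theorem~\ref{thm_ker} supplies the kernel of suspension; and Theorem~\ref{thm_add_hom} assembles these into a subgroup of $\pi_n^{\alg}(\mathbb{S}^{k-1})$ that must be shown to contain $\ker\partial$. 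For (b) the projection $p$ admits a regular section over $\mathbb{S}^{k-1}\setminus\{\south\}$ (for instance a Householder-type rotation sending $\north$ to the given unit vector), giving a regular lift of $\phi$ away from $\phi^{-1}(\south)$; the task is then to regularize across the exceptional locus $\phi^{-1}(\south)$ by modifying through regular maps into the fiber $\SO(k-1)$, using both the vanishing $\partial(\alpha)=0$ and the fact that all fiber-valued homotopy classes are algebraic by the inductive hypothesis. This last regularization is where I expect the truly delicate work to lie, and it is presumably the point at which the new constructions announced in the introduction come into play.
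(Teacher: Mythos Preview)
Your upward induction has a genuine gap precisely where you flag it, and the gap is not merely technical. Step (a) already fails: Theorems~\ref{thm_ker} and~\ref{k-odd} do not cover $\ker\partial\subset\pi_n(\s^{k-1})$. Theorem~\ref{thm_ker} concerns the kernel of the \emph{suspension} $\Sigma:\pi_n(\s^{k-1})\to\pi_{n+1}(\s^{k})$, which has no direct relation to the connecting homomorphism $\partial$ of the bundle, and Theorem~\ref{k-odd} only yields $2\pi_n(\s^{2k-1})$. Concretely, the very first nontrivial step of your $\SU$-induction is $\SU(1)\to\SU(2)\xrightarrow{\;\cong\;}\s^3$, where $\partial=0$ and you would need $\pi_n^{\alg}(\s^3)=\pi_n(\s^3)$ outright; that is Theorem~\ref{thm_37}, which the paper deduces \emph{from} Theorem~\ref{thm_groups}, so invoking it here is circular. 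Step (b) is no better: vanishing of the continuous obstruction $\partial[\phi]$ together with algebraicity of fibre classes does not manufacture a \emph{regular} lift across $\phi^{-1}(\south)$; one would need a regular homotopy lifting property, which is not available.

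The paper avoids $\pi_n(\s^{k-1})$ entirely by inducting \emph{downward}. The seed (Theorem~\ref{large_m}) is that for each $n$ there is some large $m$ with $\pi_n^{\alg}(\SO(m))=\pi_n(\SO(m))$ and $\pi_n^{\alg}(\U(m))=\pi_n(\U(m))$, obtained from Bott periodicity plus explicit regular generators in the literature. The Householder-type section you describe is then used not to lift but to build a regular \emph{retraction} $r:A\to\SO(m-1)$ on the Zariski open set $A=p^{-1}(\s^{m-1}\setminus\{\south\})$, namely $r(g)=s(p(g))^{\top}g$ (and analogously for $\U$). Given $f:\s^n\to\SO(k)$, one embeds into $\SO(m)$, approximates $i\circ f$ by a regular $g$ via Theorem~\ref{kucharz-bochnak}, and applies $r_{k+1}\circ\cdots\circ r_m\circ g$ to land back in $\SO(k)$. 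The reductions between groups also run opposite to yours: $\Or(k)$ is handled from $\SO(k)$ componentwise, and $\SU(k)$ is handled as a regular retract of $\U(k)$ via your diagonal-determinant formula.
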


Note that this is supporting evidence for Conjecture I in \cite{real_homogeneous}. As a consequence of this result we obtain
\begin{thm}\label{thm_37}
For every $n$, every continuous mapping from $\s^n$ to $\s^k$ where $k=3$ or $k=7$ is homotopic to a regular one.
\end{thm}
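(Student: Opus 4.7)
The plan is to deduce Theorem \ref{thm_37} from Theorem \ref{thm_groups} by realizing $\s^3$ and $\s^7$ as \emph{regular retracts} of classical groups from the list $\{\Or(m),\SO(m),\U(m),\SU(m)\}$. Concretely, if one can produce a regular section $s\colon\s^k\to G$ and a regular retraction $p\colon G\to\s^k$ with $p\circ s=\id_{\s^k}$ for some $G$ on the list, then for any continuous $f\colon\s^n\to\s^k$ Theorem \ref{thm_groups} applied to $s\circ f$ yields a regular $g\colon\s^n\to G$ homotopic to $s\circ f$; post-composing with $p$ gives a regular map $p\circ g\simeq p\circ s\circ f=f$.

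For $k=3$, the standard identification of unit quaternions with $\SU(2)$, sending $(a,b,c,d)\in\s^3\subset\R^4$ to $\left(\begin{smallmatrix} a+ib & c+id \\ -c+id & a-ib \end{smallmatrix}\right)$, is a regular isomorphism $\s^3\xrightarrow{\sim}\SU(2)$. So one takes $G=\SU(2)$ with $s$ and $p$ this map and its inverse, and the reduction to Theorem \ref{thm_groups} is immediate.

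For $k=7$ I would exploit the octonion algebra on $\R^8$. Octonion multiplication $\cdot$ is bilinear and satisfies $|v\cdot x|=|v|\,|x|$, so left multiplication by unit octonions defines a map $s\colon\s^7\to\SO(8)$, $s(v)(x):=v\cdot x$, which is polynomial in $v$, takes values in $\Or(8)$ by the norm identity, and lands in $\SO(8)$ because $\s^7$ is connected and $s(\north)=I$. Paired with the regular evaluation $p\colon\SO(8)\to\s^7$, $A\mapsto A\cdot\north$, where $\north=(1,0,\dots,0)$ is the octonion unit, one has $p\circ s=\id_{\s^7}$, and the scheme of the first paragraph completes the proof.

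The deduction itself is essentially formal; all the real work sits inside Theorem \ref{thm_groups}. What restricts this short argument to $k\in\{3,7\}$ (and trivially $k=1$) is precisely the availability of the algebraic retraction, which in turn reflects the existence of a real normed division algebra of dimension $k+1$. Thus no serious obstacle arises here, but any extension to other $k$ would plainly require a genuinely different construction, consistently with the fact that $\s^k$ is parallelizable only for $k\in\{0,1,3,7\}$.
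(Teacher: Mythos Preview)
Your proof is correct and follows essentially the same route as the paper: reduce to Theorem~\ref{thm_groups} by realising $\s^k$ (for $k=3,7$) as a retract of a classical group via the first-column projection $p$, the section coming from a normed division algebra structure on $\R^{k+1}$. The only cosmetic differences are that you use $\SU(2)$ instead of $\SO(4)$ for $k=3$, and that you insist on a \emph{regular} section where the paper only invokes a continuous one---in fact only $p$ needs to be regular, since the section is applied before passing to a regular representative via Theorem~\ref{thm_groups}.
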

Such a result was previously known only for $k=1,2$ or $4$ \cite[Theorem 1.1]{Bochnak_Kucharz_1987a}. 

For the next result, for a given natural number $p$ define $a_p:=2^{\phi(p-1)}$, where $\phi(t)$ is the number of integers $i$ such that $0<i\leq t$ and $i\equiv 0,1,2$ or $4 \mod 8$. The number $a_p$ is sometimes referred to as the \emph{$p$-th Radon-Hurwitz number} \cite{Mukai_1977}. Note however that this is a different object than the Hurwitz-Radon function referred to in \cite{Bochnak_Kucharz_1987b}. Theorem \ref{thm_groups} implies the following:
\begin{thm}\label{thm_codim}
For a given $m>0$, for any $k>m+1$ congruent to $-1$ modulo $a_{m+2}$, every continuous mapping from $\s^{k+m}$ to $\s^k$ is homotopic to a regular one.
\end{thm}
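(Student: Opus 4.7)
By the classical Hurwitz--Radon theorem, the divisibility condition $a_{m+2}\mid (k+1)$ is equivalent to $\s^k$ admitting $m+1$ pointwise linearly independent tangent vector fields; concretely, there exist anticommuting skew-orthogonal matrices $A_1,\dots,A_{m+1}\in\Or(k+1)$, so that the linear formula $L(u)=u_0 I+\sum_{i=1}^{m+1}u_iA_i$ produces a regular map $L\colon\s^{m+1}\to\Or(k+1)$. My plan is to combine this explicit algebraic structure with Theorem~\ref{thm_groups} through the principal fibration
\[
\SO(k)\to\SO(k+1)\xrightarrow{\mathrm{ev}}\s^k,\qquad\mathrm{ev}(A)=A\north,
\]
in which $\mathrm{ev}$ is manifestly regular.

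The key observation is that if every $f\colon\s^{k+m}\to\s^k$ admits, up to homotopy, a lift $\tilde f\colon\s^{k+m}\to\SO(k+1)$ with $\mathrm{ev}\circ\tilde f\simeq f$, then the theorem follows immediately: Theorem~\ref{thm_groups} furnishes a regular $\tilde f'\simeq\tilde f$, and the composition $\mathrm{ev}\circ\tilde f'$ is a regular representative of $[f]$. Existence of the lift is exactly the vanishing of the connecting homomorphism $\partial\colon\pi_{k+m}(\s^k)\to\pi_{k+m-1}(\SO(k))$. Under our hypothesis the tangent bundle of $\s^k$ splits off a trivial subbundle of rank $m+1$, so the structure group of $\SO(k+1)\to\s^k$ reduces from $\SO(k)$ to $\SO(k-m-1)$ and the map $\partial$ factors through $\pi_{k+m-1}(\SO(k-m-1))$. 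The auxiliary inequality $k>m+1$ places us in the stable range $k+m<2k-1$, where the EHP sequence yields a surjection $\Sigma\colon\pi_{k+m-1}(\s^{k-1})\twoheadrightarrow\pi_{k+m}(\s^k)$; consequently the vanishing of $\partial$ on a generating set of $\pi_{k+m}(\s^k)$ can be reduced to a computation that the explicit Clifford data encoded in $L$ is designed to handle.

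The principal obstacle will be precisely this homotopy-theoretic vanishing of $\partial$: the naive stability bounds are insufficient, since $k+m-1$ sits well outside the stable range of $\SO(k-m-1)$, and so the argument must lean on the explicit regular section $L$ rather than merely on its existence, presumably by interpreting $\partial[f]$ as a Whitehead-product-type pairing that the Clifford multiplication annihilates. Once this vanishing is in place, the remainder is formal: composition of regular maps is regular and $\mathrm{ev}$ is polynomial, so $\mathrm{ev}\circ\tilde f'$ provides the desired regular representative of an arbitrary homotopy class. Theorem~\ref{thm_add_hom} can be used as a convenient simplification by letting one restrict attention to a set of generators of $\pi_{k+m}(\s^k)$, since the algebraic classes are already known to form a subgroup.
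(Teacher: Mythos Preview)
Your overall strategy is exactly the paper's: use the regular projection $p\colon\SO(k+1)\to\s^k$, apply Theorem~\ref{thm_groups} to make any lift regular, and conclude that $\im p_\ast\subset\pi_{k+m}^{\alg}(\s^k)$; hence the theorem reduces to the surjectivity of $p_\ast$, i.e.\ to the vanishing of $\partial\colon\pi_{k+m}(\s^k)\to\pi_{k+m-1}(\SO(k))$.

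The gap is precisely where you say it is. Everything before ``The principal obstacle'' is fine and matches the paper. But the paper does \emph{not} attempt to prove the vanishing of $\partial$ by hand: it simply invokes \cite[Corollary~5]{Mukai_1977} (with $s=m+2$, $n=k+m+3$, $r=k+m+1$), which asserts exactly that $p_\ast$ is onto under the hypothesis $a_{m+2}\mid k+1$ and $k>m+1$. Your proposed route---reduce the structure group to $\SO(k-m-1)$, then use the EHP surjection and some Clifford/Whitehead-product identity to kill $\partial$---is not a proof as written: the factorisation of $\partial$ through $\pi_{k+m-1}(\SO(k-m-1))$ is correct but does not by itself force vanishing (that group is far from zero in the relevant range, as you note), and the phrase ``presumably by interpreting $\partial[f]$ as a Whitehead-product-type pairing that the Clifford multiplication annihilates'' is a hope, not an argument. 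Mukai's proof of this surjectivity is a genuine piece of unstable homotopy theory (it uses his earlier work on Samelson products and the $J$-map in the metastable range), and reproducing it is well beyond what the present theorem requires.

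In short: keep your first two paragraphs, delete the speculative third paragraph, and replace it with the citation to Mukai. That is literally what the paper does.
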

This, in some sense, solves the representation problem in stable homotopy.

Lastly we study the image of the $J$-homomorphism (defined for example in \cite[Chapter XI, Section 4]{Whitehead_book}) arriving at the following
\begin{thm}\label{thm_J}
For every $n$ and $k$, $2J(\pi_n(\SO(k)))\subset\pi_{n+k}^\alg(\s^k)$. Moreover if $n\equiv 2,4,5$ or $6\mod 8$ then $J(\pi_n(\SO(k)))\subset\pi_{n+k}^\alg(\s^k)$.
\end{thm}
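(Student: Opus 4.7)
The natural approach combines Theorem \ref{thm_groups} with an explicit algebraic realisation of the Hopf construction that computes $J$.

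First, by Theorem \ref{thm_groups} we may replace the given class in $\pi_n(\SO(k))$ by a regular map $f:\s^n\to\SO(k)$. Since $\mathcal{O}(\s^n)\cong\R[x_0,\dots,x_n]/(|x|^2-1)$, the entries of $f$ are polynomial in $x$, and using the relation $|x|^2=1$ (whose defining polynomial has even degree) we may take them homogeneous of a common degree $D$ on $\R^{n+1}$; the parity of $D$ is then an invariant of the polynomial representative. The Hopf construction produces the standard representative
\[
J(f)(u,v)=\bigl(|u|^2-|v|^2,\;2|u|\,f(u/|u|)v\bigr),\qquad (u,v)\in\s^{n+k}\subset\R^{n+1}\times\R^k,
\]
whose first coordinate equals $1-2|v|^2$ and is regular, and whose second coordinate equals $2\widetilde f(u)v/|u|^{D-1}$, with $\widetilde f:\R^{n+1}\to M_k(\R)$ the polynomial extension of $f$. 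When $D$ is odd, $|u|^{D-1}=(1-|v|^2)^{(D-1)/2}$ is a polynomial in $v$, and a direct check (using homogeneity of $\widetilde f$ and the relation $|u|^2+|v|^2=1$) shows that the resulting ratio extends to a regular map $\s^{n+k}\to\s^k$. Hence $J(f)$ itself is algebraic whenever its polynomial degree can be taken odd.

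For the first assertion the strategy is to exhibit, for every $[f]$, a polynomial representative of the doubled class $2[f]\in\pi_n(\SO(k))$ of odd degree; the Hopf formula then produces a regular representative of $2J(f)$. Since the parity is invariant of the polynomial representation, this requires changing the representing map: the plan is to produce a regular null-homotopic auxiliary $g:\s^n\to\SO(k)$ of odd polynomial degree, for instance by composing an even number of reflections $I-2a(x)a(x)^T$ for a regular $a:\s^n\to\s^{k-1}$ of odd degree (pairing them so that the total lies in $\SO(k)$ and is deformable to the identity through such matrices). Given such $g$, the product $f\cdot f\cdot g$ represents $2[f]$ and has polynomial degree of odd parity regardless of the parity of $D$. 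The construction of $g$ is the technical heart of the argument and the main obstacle: it demands a careful analysis of which polynomial degrees are realised inside the algebraic variety $\SO(k)$ by null-homotopic arcs.

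For the second assertion the condition $n\equiv 2,4,5,6\pmod 8$ is exactly the case $\pi_n(\SO)=0$ by Bott periodicity, so every element of the image of $J$ vanishes stably. Combined with Theorem \ref{thm_ker}, which identifies the kernel of a single suspension $\Sigma:\pi_{n+k}(\s^k)\to\pi_{n+k+1}(\s^{k+1})$ with algebraic classes in our dimension range, and with a refinement of the parity argument of the previous paragraph along the stabilisation tower of $\SO$ (so that the auxiliary null-homotopy $g$ can be replaced by a construction respecting stabilisation), the factor of $2$ can be removed in these residue classes, yielding $J(\pi_n(\SO(k)))\subset\pi_{n+k}^\alg(\s^k)$.
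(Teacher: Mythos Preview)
Your central technical claim is false, and this undermines the whole plan. You assert that when the homogeneous degree $D$ of $\widetilde f$ is odd, the component
\[
\frac{2\,\widetilde f(u)\,v}{|u|^{D-1}}=\frac{2\,\widetilde f(u)\,v}{(1-|v|^2)^{(D-1)/2}}
\]
extends to a \emph{regular} function on $\s^{n+k}$. It extends continuously, but not regularly. Near a point with $u=0$ one may take $u_0,\dots,u_n$ together with $k-1$ of the $v_j$ as local coordinates on $\s^{n+k}$, and the expression becomes a ratio such as $u_0^{\,3}/(u_0^2+\cdots+u_n^2)$ (times a nonvanishing factor in $v$); already this model function is not $C^1$ at $u=0$, since $\partial_{u_0}\bigl(u_0^3/|u|^2\bigr)$ tends to $1$ along the $u_0$--axis and to $0$ along the $u_1$--axis. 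Homogenising $f$ destroys the $\SO(k)$--valued property off the sphere, and there is no algebraic identity forcing $|u|^{D-1}$ to divide $\widetilde f(u)v$. Consequently the odd--$D$ Hopf formula does not produce a regular representative of $J([f])$, and the whole parity discussion (including the admittedly unfinished construction of the auxiliary $g$) is moot.

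The paper's proof avoids exactly this trap. Its Lemma states that if $f$ is regular and $\SO(k)$--valued on a Zariski open neighbourhood of $\s^n$ in $\R^{n+1}$, then an explicit rational Hopf--type formula with denominator $Q(x)^2+|y|^2$ (which is strictly positive on $\s^{n+k}$) gives a regular representative of $J([f|_{\s^n}])$. The real work is then to manufacture such a neighbourhood extension. For $n\equiv 2,4,5,6\pmod 8$ one uses Bott periodicity to make $i\circ f$ null--homotopic in a large $\SO(m)$, precomposes with the radial projection $\R^{n+1}\setminus\{0\}\to\s^n$, approximates regularly via Theorem~\ref{kucharz-bochnak}, and retracts back to $\SO(k)$ with the regular retractions of Corollary~\ref{retraction}; this yields the required neighbourhood map. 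The same scheme handles $2[f]$ for $n\equiv 0,1\pmod 8$, while for $n\equiv 3,7\pmod 8$ one instead precomposes $f$ with the regular degree--$2$ map $\R^{n+1}\setminus\{0\}\to\s^n$ from the proof of Theorem~\ref{k-odd}. Your attempt to invoke Theorem~\ref{thm_ker} for the second assertion also does not go through: stable triviality of $i_\ast[f]$ only places $J([f])$ in the kernel of an \emph{iterated} suspension, whereas Theorem~\ref{thm_ker} concerns a single suspension and requires the bound $n<3k-2$.
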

Altogether, our results combined with some previously known ones cover all pairs $(n,k)$ of natural numbers satisfying $n\leq k+5$:
\begin{thm}
If $n\leq k+5$ then $\pi_n(\s^k)=\pi^\alg_n(\s^k)$.
\begin{proof}
As noted in \cite[Theorem 5.6]{real_homogeneous}, the only cases which have been left unsolved until now are the five following pairs $(n,k)$:
\begin{enumerate}
    \item $(7, 3)$ and $(8, 3)$, which are covered by Theorem \ref{thm_37},
    \item $(9, 5)$ and $(11, 6)$, which are covered by Theorem \ref{thm_ker}, because $\pi_{10}(\s^6)$ and $\pi_{12}(\s^7)$ are trivial (see \cite[Chapter XIV]{Toda_1962}),
    \item $(10, 5)$, which is also covered by Theorem \ref{thm_ker}, because $\pi_{10}(\s^5)$ is cyclic of order $2$, while $\pi_{11}(\s^6)$ is infinite cyclic, so it contains no non-trivial element of finite order (\cite[Chapter XIV]{Toda_1962}).
\end{enumerate}

\end{proof}
\end{thm}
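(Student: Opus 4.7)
The plan is to reduce the theorem to a finite case analysis by leaning on the existing literature and then to dispatch the remaining exceptional pairs using the theorems proved earlier in this paper. Specifically, I would first invoke Theorem~5.6 of \cite{real_homogeneous}, which catalogues exactly which pairs $(n,k)$ in the range $n\leq k+5$ were unresolved prior to this work. That list contains only five pairs: $(7,3)$, $(8,3)$, $(9,5)$, $(10,5)$, and $(11,6)$. After this reduction it suffices to treat each pair individually.

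The two pairs with $k=3$ fall immediately under Theorem \ref{thm_37}, which asserts that every continuous mapping from any sphere into $\s^3$ (or $\s^7$) is homotopic to a regular one; no further computation is required.

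For the three remaining pairs I would use Theorem \ref{thm_ker}, the kernel-of-suspension criterion, after first checking the hypothesis $k<3n-2$, which holds trivially in each case. The task then becomes showing that every class in the relevant $\pi_n(\s^k)$ is killed by the suspension homomorphism $\Sigma\colon \pi_n(\s^k)\to\pi_{n+1}(\s^{k+1})$. I would consult Toda's tables \cite{Toda_1962}: $\pi_{10}(\s^6)=0$ handles the pair $(9,5)$ and $\pi_{12}(\s^7)=0$ handles the pair $(11,6)$, since in both situations the target of $\Sigma$ is trivial. For $(10,5)$ one has $\pi_{10}(\s^5)\cong\mathbb Z/2$ and $\pi_{11}(\s^6)\cong\mathbb Z$, so the image of $\Sigma$ must be a torsion subgroup of a torsion-free group and is therefore zero, putting everything in the kernel.

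Since all the heavy machinery has already been assembled, I do not foresee any serious technical obstacle; the work is essentially bookkeeping. The one point that warrants care is making sure the list of open pairs cited from \cite{real_homogeneous} is genuinely exhaustive for the range $n\leq k+5$ and that the specific homotopy groups quoted from \cite{Toda_1962} are referenced correctly, since any omission here would leave a genuine gap in the argument rather than a fixable detail.
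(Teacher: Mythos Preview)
Your proposal is correct and follows essentially the same approach as the paper's own proof: reduce to the five exceptional pairs via \cite[Theorem 5.6]{real_homogeneous}, handle $k=3$ by Theorem~\ref{thm_37}, and handle the remaining three pairs via Theorem~\ref{thm_ker} using the indicated values of $\pi_{10}(\s^6)$, $\pi_{12}(\s^7)$, $\pi_{10}(\s^5)$, and $\pi_{11}(\s^6)$ from Toda. The only minor addition on your side is the explicit verification of the hypothesis of Theorem~\ref{thm_ker}, which the paper leaves implicit.
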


\section{Preliminaries}
\begin{defi}
A \emph{linear real algebraic group} is a Zariski closed subgroup of the general linear group $\mathrm{GL}(n,\R)$ for some $n$. Let $G$ be such a group. A \emph{homogeneous
space} for $G$ is a real affine variety $Y$
on which $G$ acts transitively, the action $G\times Y\rightarrow Y$, $(g, y) \mapsto g\cdot y$ being a regular mapping. 
\end{defi}

Notice that in this case the variety $Y$ is necessarily non-singular.

There is the following recent strong result of \cite{real_homogeneous} which proves an equivalence between homotopy and approximation in a quite general setting:
\begin{thm}[\protect{\cite[Theorem 1.1]{real_homogeneous}}]\label{kucharz-bochnak}
Let $X$ be a non-singular real affine variety and let $Y$ be a homogenous space for some linear real algebraic group $G$. Let $f:X\rightarrow Y$ be a continuous mapping. If $f$ is homotopic to a regular mapping, then $f$ can be approximated by regular mappings in the compact-open topology.
\end{thm}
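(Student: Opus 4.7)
The plan is to establish the nontrivial ``if'' direction; the converse is immediate since a regular map sufficiently close to $f$ in the compact-open topology is automatically homotopic to $f$. The strategy is to exploit the transitive regular $G$-action on $Y$ to convert the approximation problem on $Y$ into a nullhomotopic approximation problem on the linear algebraic group $G$, where polynomial approximation in the ambient matrix space is available.

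Given a regular $f_0:X\to Y$ and a homotopy $F:X\times[0,1]\to Y$ from $f_0$ to $f$, the first step is to lift $F$ through the action. For each $x\in X$, the orbit map $\alpha_x:G\to Y$, $g\mapsto g\cdot f_0(x)$, is a principal bundle with structure group $\mathrm{Stab}(f_0(x))$, a conjugate of a fixed isotropy subgroup $H\leq G$. Using local smooth sections of the regular submersion $(g,y)\mapsto(g\cdot y,y)$ from $G\times Y$ onto $Y\times Y$, together with a smooth partition of unity on $X\times[0,1]$, one constructs a continuous $\Phi:X\times[0,1]\to G$ with $\Phi(x,0)=e$ and $\Phi(x,t)\cdot f_0(x)=F(x,t)$. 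Setting $h:=\Phi(\cdot,1)$ yields a continuous map $h:X\to G$ satisfying $h(x)\cdot f_0(x)=f(x)$, and crucially $h$ is nullhomotopic to the constant map $e$ via $\Phi$.

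The second step is to approximate $h$ by a regular map $\tilde h:X\to G$. Since $G\subset \mathrm{GL}(n,\R)\subset \R^{n\times n}$, one first polynomial-approximates $h$ coordinate-wise in ambient matrix space, then projects onto $G$ using a semi-algebraic tubular neighborhood retraction, available because $G$ is a non-singular real algebraic variety. The nullhomotopy of $h$ places its image in a controlled component and ensures the retraction can be applied globally. Defining $\tilde f(x):=\tilde h(x)\cdot f_0(x)$ yields a composition of the regular maps $\tilde h$, $f_0$ and the $G$-action, hence a regular map $X\to Y$; continuity of the action then implies that $\tilde f$ is close to $f(x)=h(x)\cdot f_0(x)$ in the compact-open topology, completing the proof.

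The main obstacle is the first step: combining the pointwise path lifts into a continuous global lift $\Phi$ over $X\times[0,1]$. This requires careful use of the local triviality of the principal bundle $G\to G/H=Y$, smooth partitions of unity, and compatibility of the resulting local sections under the ambient $G$-action; without the homotopy $F$ providing a starting point at the identity, the lift might not exist globally because the pullback bundle $f^*G\to X$ need not be trivial on its own. Once this lifting is established, the approximation step reduces to classical Weierstrass-type theory for maps into non-singular affine real algebraic varieties, applied to a nullhomotopic map into the linear algebraic group $G$.
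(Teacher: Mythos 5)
This statement is quoted in the paper as an external result (\cite[Theorem 1.1]{real\_homogeneous}); the paper gives no proof of it, so the only meaningful comparison is with the argument in the cited source, which is substantially deeper than what you propose.

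Your first step (lifting the homotopy $F$ through the orbit map to obtain a nullhomotopic $h:X\to G$ with $h(x)\cdot f_0(x)=f(x)$, and then setting $\widetilde{f}=\widetilde{h}\cdot f_0$) is a sound and standard reduction, although the construction of $\Phi$ should be phrased via the homotopy lifting property of the locally trivial bundle $\{(x,t,g):g\cdot f_0(x)=F(x,t)\}\to X\times[0,1]$ rather than via a partition of unity --- one cannot glue $G$-valued local sections by convex combinations. The fatal gap is in your second step. A semi-algebraic (or Nash, or smooth) tubular neighbourhood retraction onto $G\subset\R^{n\times n}$ is \emph{not} a regular map, so composing a polynomial approximation of $h$ with such a retraction produces only a Nash map, not a regular one. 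There is no regular retraction of a Euclidean neighbourhood onto a general non-singular affine variety, and this is precisely why regular approximation is a hard problem: if your argument worked, every nullhomotopic continuous map into any non-singular affine variety would be regularly approximable, which is the very content (for $Y=G$, a homogeneous space over itself) of the theorem you are trying to prove. In other words, your reduction replaces the approximation problem on $Y$ by the approximation problem for a nullhomotopic map into $G$, and then asserts the latter by an argument that does not go through; the actual proof in \cite{real_homogeneous} handles this core case using the structure theory and rationality properties of linear algebraic groups (factoring a nullhomotopic map into pieces close to the identity and parametrizing $G$ near the identity by regular data), none of which is replaced by Stone--Weierstrass plus a tubular neighbourhood.
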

We will need the following slight strengthening of the result:
\begin{obs}\label{kucharz-bochnak_obs}
If $a$ is a given point of $X$, in Theorem \ref{kucharz-bochnak} we can furthermore assume that the approximating mappings $\widetilde{f}$ of $f$ all satisfy $\widetilde{f}(a)=f(a)$.
\begin{proof}
The mapping $G\rightarrow Y$, $g\mapsto g \cdot f(a)$ is a submersion at the neutral element of $G$. Hence if $\widetilde{f}(a)$ is close to $f(a)$ then we can choose an element $g_0\in G$ close to the identity such that $g_0\cdot f(a)=\widetilde{f}(a)$. Now $x\mapsto g_0^{-1}\cdot \widetilde{f}(x)$ is a regular approximation of $f$ satisfying the desired condition.
\end{proof}
\end{obs}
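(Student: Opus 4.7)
The plan is to first invoke Theorem \ref{kucharz-bochnak} to obtain a regular approximation $\widetilde{f}$ of $f$, and then correct it by a left multiplication by a group element close to the identity so that the value at $a$ matches $f(a)$ exactly. The correction is possible because $G$ acts transitively on $Y$, which means the orbit of $f(a)$ covers a neighborhood of itself already via elements of $G$ close to the identity.

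Concretely, I would consider the regular orbit map $\pi \colon G \rightarrow Y$ defined by $\pi(g) = g\cdot f(a)$. Transitivity of the action together with the fact that $Y$ is a smooth variety forces $\pi$ to be a submersion at the neutral element $e\in G$ (its differential at $e$ is surjective onto $T_{f(a)}Y$, otherwise the orbit would be a proper submanifold). Consequently $\pi$ is locally surjective near $e$: there exists a neighborhood $V$ of $f(a)$ in $Y$ and, for every $y\in V$, some $g_0\in G$ arbitrarily close to $e$ with $g_0\cdot f(a)=y$. Applying this with $y=\widetilde{f}(a)$, after choosing the initial approximation $\widetilde{f}$ fine enough to ensure $\widetilde{f}(a)\in V$, gives the desired $g_0$.

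Having produced such $g_0$, I would define the corrected approximation
\[
\widetilde{f}'(x):=g_0^{-1}\cdot \widetilde{f}(x).
\]
This is a regular map (as a composition of $\widetilde{f}$ with the regular automorphism $y\mapsto g_0^{-1}\cdot y$ of $Y$), and by construction $\widetilde{f}'(a)=g_0^{-1}\cdot \widetilde{f}(a)=f(a)$. To conclude, I would verify that $\widetilde{f}'$ still approximates $f$ in the compact-open topology: given a compact $K\subset X$ and $\varepsilon>0$, first pick $\widetilde{f}$ so that $\widetilde{f}(K)$ lies in a compact neighborhood $L$ of $f(K)$ on which $\sup_{y\in L}\|g\cdot y-y\|<\varepsilon/2$ for all $g$ in some neighborhood $U$ of $e$ (possible by continuity of the action), and such that $\sup_K\|\widetilde{f}-f\|<\varepsilon/2$; then shrink the approximation further so that $g_0\in U$ via the local surjectivity of $\pi$. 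The triangle inequality gives $\sup_K\|\widetilde{f}'-f\|<\varepsilon$.

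The only mildly delicate point — which I would flag as the main obstacle to a completely formal proof — is the uniform continuity of the $G$-action on the compact set $\widetilde{f}(K)$, which is what allows the pointwise correction at $a$ to propagate to an approximation estimate over all of $K$ without worsening it appreciably. Everything else is routine consequences of transitivity and the submersion property.
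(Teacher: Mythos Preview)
Your proposal is correct and follows essentially the same approach as the paper: use the submersion property of the orbit map $g\mapsto g\cdot f(a)$ at the identity to find $g_0$ close to $e$ with $g_0\cdot f(a)=\widetilde{f}(a)$, and then take $g_0^{-1}\cdot\widetilde{f}$ as the corrected approximation. You spell out the uniform-continuity justification for the final approximation estimate that the paper leaves implicit, but the argument is otherwise identical.
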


Notice that it implies the following:
\begin{cor}
Let $Y$ be a homogenous space for a linear real algebraic group $G$ and let $n$ be a natural number. The following conditions are equivalent:
\begin{enumerate}
    \item every continuous mapping from $\s^n$ to $Y$ is homotopic to a regular one,
    \item $\pi_n^\alg(Y,a)=\pi_n(Y,a)$ for every point $a\in Y$,
    \item $\pi_n^\alg(Y,a)=\pi_n(Y,a)$ for some point $a\in Y$.
\end{enumerate}
\begin{proof}
\implication{1}{2} Given a mapping $f:(\s^n,e)\rightarrow (Y,a)$ it follows from Observation \ref{kucharz-bochnak_obs} that we can find a close regular approximation $\widetilde{f}$ of $f$ satisfying $\widetilde{f}(e)=a$. The mapping $\widetilde{f}$ represents the same homotopy class as $f$ by \cite[Chapter III, Theorem 2.5]{Kosinski}.

\implication{2}{3} Trivial.

\implication{3}{1} Given a mapping $f:\s^n\rightarrow Y$ we can compose it with an element $g$ of $G$ so that $h(x):= g\cdot f(x)$ satisfies $h(e)=a$. Now it suffices to apply Condition 3 to find a regular mapping $\widetilde{h}$ homotopic to $h$ and then take $\widetilde{f}(x):= g^{-1}\cdot \widetilde{h}(x)$.
\end{proof}
\end{cor}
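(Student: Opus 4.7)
The plan is to prove the three implications in the cycle $(1)\Rightarrow(2)\Rightarrow(3)\Rightarrow(1)$, leveraging Theorem \ref{kucharz-bochnak} together with its strengthening Observation \ref{kucharz-bochnak_obs}, plus the standard fact that two sufficiently close continuous maps into a manifold are homotopic.

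For $(1)\Rightarrow(2)$ I would start with any pointed map $f:(\s^n,\north)\rightarrow (Y,a)$. By assumption $f$ is freely homotopic to a regular map, and in particular by Theorem \ref{kucharz-bochnak} it can be approximated by regular mappings in the compact-open topology. A naive regular approximation need not send $\north$ to $a$, so the pointed homotopy class is not yet under control; this is the only real subtlety of the corollary. Here I would apply Observation \ref{kucharz-bochnak_obs} to produce a regular approximation $\widetilde{f}$ with $\widetilde{f}(\north)=f(\north)=a$. Since $\widetilde{f}$ can be chosen arbitrarily close to $f$, an appeal to \cite[Chapter III, Theorem 2.5]{Kosinski} (sufficiently close maps into a manifold are homotopic, and through small enough homotopies the basepoint condition can be preserved) yields that $\widetilde{f}$ and $f$ represent the same element of $\pi_n(Y,a)$, so $[f]\in\pi_n^{\alg}(Y,a)$. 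This gives $(2)$ for every basepoint $a$. The implication $(2)\Rightarrow(3)$ needs no argument.

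For $(3)\Rightarrow(1)$, fix a basepoint $a$ for which $\pi_n^{\alg}(Y,a)=\pi_n(Y,a)$, and let $f:\s^n\rightarrow Y$ be an arbitrary continuous map. Since $G$ acts transitively on $Y$ by a regular action, I can choose an element $g\in G$ with $g\cdot f(\north)=a$ and consider $h(x):=g\cdot f(x)$; this is a (freely) translated copy of $f$ that now sends $\north$ to $a$. By $(3)$ the pointed class $[h]\in\pi_n(Y,a)$ admits a regular representative $\widetilde{h}$, and then $\widetilde{f}(x):=g^{-1}\cdot\widetilde{h}(x)$ is a regular map freely homotopic to $f$, completing the cycle.

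The only step where something nontrivial happens is $(1)\Rightarrow(2)$, where the strengthened basepoint-preserving approximation of Observation \ref{kucharz-bochnak_obs} is essential; the other two implications are purely formal, using only the transitivity and regularity of the $G$-action on $Y$.
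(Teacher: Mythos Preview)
Your proof is correct and follows essentially the same approach as the paper's own proof: the cycle $(1)\Rightarrow(2)\Rightarrow(3)\Rightarrow(1)$, with Observation~\ref{kucharz-bochnak_obs} doing the work in $(1)\Rightarrow(2)$ and the transitive regular $G$-action handling $(3)\Rightarrow(1)$. Your additional commentary on why basepoint preservation is the only nontrivial point is accurate but not present in the paper's terser version.
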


\section{Addition of algebraic homotopy classes}
We will need to work with the stereographic projection from the point $\south$ of the sphere to the affine space. We recall that it can be represented by the following explicit formula
\begin{align*}
    \pi:\s^n\backslash\{\south \} &\rightarrow \R^n \\
    (x_1,\dots,x_{n+1})&\mapsto\left(\frac{x_2}{1+x_1},\dots,\frac{x_{n+1}}{1+x_1}\right)
\end{align*}
with inverse given by
\begin{equation*}
    (X_1,\dots,X_n)\mapsto \left(\frac{1-\sum_i X_i^2}{1+\sum_i X_i^2 }, \frac{2X_1}{1+\sum_i X_i^2},\dots,\frac{2X_n}{1+\sum_i X_i^2}\right)
\end{equation*}

All of the results in this section come as consequences of the following simple construction:
\begin{defi}
We define a mapping $\oplus:\left(\s^n\times \s^n\right)\backslash\{(\south,\south)\} \rightarrow \s^n$ in the following way:

If either $a=\south$ or $b=\south$ then $a\oplus b:=\south$. Otherwise $a\oplus b$ is defined by the following composition
\begin{center}
    \begin{tikzcd}[column sep=large]
        \s^n\backslash\{\south \}\times\s^n\backslash\{\south\} \arrow[r, "\pi\times\pi"] & \R^n\times\R^n \arrow[r, "+"] & \R^n \arrow[r, "\pi^{-1}"] & \s^n
    \end{tikzcd}
\end{center}
where $+:\R^n\times\R^n\rightarrow\R^n$ is the usual vector addition.
\end{defi}

\begin{obs}
The mapping $\oplus$ is regular.
\begin{proof}
Let the coordinates on the two spheres be denoted by $x_1,\dots,x_{n+1}$ and $y_1,\dots,y_{n+1}$ respectively. Consider the following identity in the ring of regular functions of the product $\s^n\backslash\{\south \}\times\s^n\backslash\{\south\}$:
\begin{multline}\label{identity1}
    \sum_{i=2}^{n+1}\left(\frac{x_i}{1+x_1}+\frac{y_i}{1+y_1}\right)^2=\\
    =\frac{ (1-x_1^2)(1+y_1)^2
    +2\sum_{i=2}^{n+1}x_iy_i(1+x_1)(1+y_1)+(1-y_1^2)(1+x_1)^2}{(1+x_1)^2(1+y_1)^2}=\\
    =\frac{2-2y_1x_1+2\sum_{i=2}^{n+1}x_iy_i}{(1+x_1)(1+y_1)}
\end{multline}
Here in the first equality we have used the fact that $\sum_{i=1}^{n+1} x_i^2=\sum_{i=1}^{n+1} y_i^2=1$. Plugging everything in in the definition of $\oplus$ and applying the identity (\ref{identity1}) we get that the first coordinate of $\oplus$ on $\s^n\backslash\{\south \}\times\s^n\backslash\{\south\}$ can be written as
\begin{equation*}
    \frac{(1+x_1)(1+y_1)-2+2y_1x_1-2\sum_{i=2}^{n+1}x_iy_i}{(1+x_1)(1+y_1)+2-2y_1x_1+2\sum_{i=2}^{n+1}x_iy_i}
\end{equation*}
while the $j$-th one for $j>1$ can be written as
\begin{equation*}
    \frac{2x_j(1+y_1)+2y_j(1+x_1)}{(1+x_1)(1+y_1)+2-2y_1x_1+2\sum_{i=2}^{n+1}x_iy_i}
\end{equation*}

Plugging either $a=\south$ or $b=\south$ into the these formulas one easily verifies that they are defined on the entire set $\left(\s^n\times \s^n\right)\backslash\{(\south,\south)\}$ and give a regular representation of $\oplus$. 
\end{proof}
\end{obs}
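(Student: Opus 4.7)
The plan is to produce explicit regular formulas for $\oplus$ on the whole set $(\s^n \times \s^n) \setminus \{(\south, \south)\}$, by starting from the obviously rational composition $\pi^{-1} \circ (+) \circ (\pi \times \pi)$ defined on the open set $U := (\s^n \setminus \{\south\}) \times (\s^n \setminus \{\south\})$, clearing a common denominator, and then checking that the resulting expression (a) extends regularly across the locus $\{x_1 = -1\} \cup \{y_1 = -1\}$ (minus the point $(\south, \south)$), and (b) matches the value $\south$ prescribed by the definition at points of this locus.

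Concretely, on $U$ I would compute the key ingredient
\[
    \bigl\|\pi(a)+\pi(b)\bigr\|^2 \;=\; \sum_{i=2}^{n+1}\left(\frac{x_i}{1+x_1}+\frac{y_i}{1+y_1}\right)^2,
\]
and use the sphere identities $\sum_{i=1}^{n+1} x_i^2 = \sum_{i=1}^{n+1} y_i^2 = 1$ (in particular $\sum_{i\geq 2} x_i^2 = (1-x_1)(1+x_1)$) to reduce the right-hand side to a fraction whose denominator is only $(1+x_1)(1+y_1)$ rather than $(1+x_1)^2(1+y_1)^2$. Plugging this into the explicit formula for $\pi^{-1}$, every coordinate of $\oplus$ acquires a common factor of $(1+x_1)(1+y_1)$ in both numerator and denominator; cancelling it yields the rational expressions of the author, namely a first coordinate of the form $N_1/D$ and $j$-th coordinate $N_j/D$ for $j>1$, where $D = (1+x_1)(1+y_1) + 2 - 2x_1y_1 + 2\sum_{i=2}^{n+1} x_iy_i$.

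The remaining task is then the verification step: I need to check that $D$ vanishes on $\s^n \times \s^n$ only at $(\south, \south)$, and that at points of the form $(\south, b)$ or $(a, \south)$ with the other argument different from $\south$, these formulas evaluate to $\south$. Both checks are short direct substitutions: at $a = \south$ one gets $D = 2(1+y_1)$, which is nonzero precisely when $b \neq \south$, and then $N_1/D = -1$ and $N_j/D = 0$ for $j>1$, as needed. The only genuine computational obstacle is the algebraic simplification in the middle paragraph; I expect the cleanest way to organize it is exactly as the author has done, reducing $\|\pi(a)+\pi(b)\|^2$ to a single fraction over $(1+x_1)(1+y_1)$ before inverting stereographic projection, so that the cancellation is transparent and one avoids a mess of degree-four polynomial arithmetic.
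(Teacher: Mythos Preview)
Your proposal is correct and follows essentially the same approach as the paper: compute $\|\pi(a)+\pi(b)\|^2$, use the sphere relations to reduce the denominator from $(1+x_1)^2(1+y_1)^2$ to $(1+x_1)(1+y_1)$, plug into $\pi^{-1}$ and cancel the common factor, then verify by direct substitution that the resulting rational expression is defined and equals $\south$ on the locus $\{a=\south\}\cup\{b=\south\}$ minus $(\south,\south)$. Your explicit check that $D=2(1+y_1)$ at $a=\south$ (hence nonzero exactly away from $(\south,\south)$) is a slight elaboration of what the paper leaves as ``one easily verifies,'' but otherwise the arguments coincide.
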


Theorems \ref{thm_add_hom} and \ref{thm_add_cohom} come naturally as consequences of the above construction:
\begin{proof}[Proof of Theorem \ref{thm_add_hom}]
Given a regular mapping $f:(\s^n,\north)\rightarrow (\s^k,\north)$, the class $-[f]\in\pi_n(\s^k)$ is represented by $f\circ \sym$, where $\sym:\s^n\rightarrow \s^n$ is any symmetry through a vector hyperplane passing through $\north$. Hence $\pi_n^\alg(\s^k)$ is closed under taking inverse elements. 

Let now $\alpha_1,\alpha_2\in \pi_n^\alg(\s^k)$ be two algebraic homotopy classes. Choose $f_1,f_2$ to be their representatives such that $f_1$ is constantly equal to $\north$ on the hemisphere $\{x_2\leq 0\}$, while $f_2$ is constantly equal to $\north$ on the hemisphere $\{x_2\geq 0\}$. Then $f_1\oplus f_2$ is well defined and by definition represents the homotopy class $\alpha_1+\alpha_2$. Now applying Observation $\ref{kucharz-bochnak_obs}$ we can find regular approximations $\widetilde{f_1},\widetilde{f_2}$ of $f_1,f_2$ respectively satisfying $\widetilde{f_1}(\north)=\widetilde{f_2}(\north)=\north$. If the approximations are close enough then $\widetilde{f_1}\oplus\widetilde{f_2}$ is well defined and close to $f_1\oplus f_2$, hence it represents the same homotopy class.
\end{proof}

\begin{proof}[Proof of Theorem \ref{thm_add_cohom}]
Similarly to the preceding proof, given a regular mapping $f:X\rightarrow \s^k$, the class $-[f]\in\pi^k(X)$ is represented by $\sym\circ f$, where $\sym:\s^k\rightarrow \s^k$ is any symmetry through a vector hyperplane. This shows that $\pi^k_\alg(X)$ is closed under taking inverse elements. 

Let now $\alpha_1,\alpha_2\in \pi^k_\alg(X)$ be two algebraic cohomotopy classes. Let $f_1,f_2$ be their representatives such that $\supp(f_1)\cap \supp(f_2)=\emptyset$ (we can always find such representatives because the codimension is large enough). Then $f_1\oplus f_2$ is well defined and by definition represents the class $\alpha_1+\alpha_2$. Now as before applying Theorem $\ref{kucharz-bochnak}$ we can find regular approximations $\widetilde{f_1},\widetilde{f_2}$ of $f_1,f_2$ respectively. If the approximations are close enough then $\widetilde{f_1}\oplus\widetilde{f_2}$ is well defined and represents the class $\alpha_1+\alpha_2$.
\end{proof}

Our next results will make use of the Pontryagin–Thom construction. Recall that there is a canonical isomorphism 
\begin{equation*}
    \Pt:\pi_n(\s^k)\rightarrow F^k(\R^n)
\end{equation*}
where $F^k(\R^n)$ is the group of framed cobordism classes of framed submanifolds of $\R^n$ of codimension $k$, and $\R^n$ is identified with $\s^n\backslash\{\north\}$. The trivial element of the group is represented by the empty manifold. For a formal treatment of this notion see \cite[Chapter IX, Section 5]{Kosinski}.

It will be convenient to introduce some notation related to this construction. A framed submanifold of $\R^n$ of codimension $k$ is a pair $(M, F)$, where $M$ is a codimension $k$ smooth submanifold of $\R^n$ with trivial normal bundle $\nu (M)$, while $F = (\nu_1,\dots , \nu_k)$ is a $k$-tuple of smooth sections of $\nu(M)$ such that $(\nu_1(a),\dots, \nu_k(a))$ is a basis of the fiber $\nu(M)_a$ of $\nu(M)$ over $a$ for every point $a$ in $M$. Sometimes when it is clear from the context what framing $M$ is equipped with we will just write $M$ instead for $(M,F)$.

\begin{prop}\label{tau_is_regular}
Let $(M_1,F_1),(M_2,F_2)$ be two framed submanifolds of $\R^n$ of codimension $k$, both framed cobordant to the empty set. Suppose that $M_1\cap M_2=\emptyset$. Then $\Pt^{-1}[(M_1\sqcup M_2,F)]$ is represented by a regular mapping, where $F$ is defined as 
\begin{equation*}
    F(a)=
    \begin{cases}
        F_1(a) &\text{ for } a\in M_1 \\
        F_2(a) &\text{ for } a\in M_2
    \end{cases}
\end{equation*}
\begin{proof}
Using the Pontryagin construction we can construct smooth mappings $f_1,f_2$, such that $f_1,f_2$ are transverse to $\{\south\}$, $f_i^{-1}(\south)=M_i$ with the framing, $f_1(\north)=f_2(\north)=\north$ and $\supp(f_1)\cap\supp(f_2)=\emptyset$. Then $f_1\oplus f_2$ represents the desired class. Now since $\Pt([f_2])$ and $\Pt([f_2])$ are framed cobordant to the empty set, $f_1$ and $f_2$ are homotopic to the constant mapping so we can apply Theorem \ref{kucharz-bochnak} to find regular approximations $\widetilde{f_1},\widetilde{f_2}$ of $f_1,f_2$ respectively. Once again, if the approximations are close enough then $\widetilde{f_1}\oplus\widetilde{f_2}$ is well defined and close to $f_1\oplus f_2$.
\end{proof}
\end{prop}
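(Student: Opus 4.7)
The natural strategy is to realize the class of $(M_1\sqcup M_2,F)$ by the $\oplus$-sum of two maps built separately from the $M_i$, exploiting the disjointness of the submanifolds to stay in the domain of $\oplus$, and then to approximate each summand individually by a regular map using the fact that each $M_i$ is null-cobordant.

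Concretely, I would first apply the Pontryagin--Thom construction to produce smooth pointed maps $f_i:(\s^n,\north)\to(\s^k,\north)$ that are transverse to $\south$ with $f_i^{-1}(\south)=M_i$ carrying the framing $F_i$, and with $\supp(f_i)$ contained in a prescribed open tubular neighborhood of $M_i$ in $\R^n=\s^n\setminus\{\north\}$. Because $M_1,M_2$ are disjoint compact submanifolds we may choose these neighborhoods disjoint, giving $\supp(f_1)\cap\supp(f_2)=\emptyset$. Consequently $(f_1(x),f_2(x))\neq(\south,\south)$ for every $x\in\s^n$, so $f_1\oplus f_2$ is well defined on all of $\s^n$, agreeing with $f_2$ off $\supp(f_1)$ and with $f_1$ off $\supp(f_2)$. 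In particular $f_1\oplus f_2$ is transverse to $\south$ with preimage $M_1\sqcup M_2$ and framing $F$, so it represents $\Pt^{-1}[(M_1\sqcup M_2,F)]$.

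Because $M_i$ is framed null-cobordant, $f_i$ is null-homotopic, hence homotopic to a regular (constant) map. Since $\s^k$ is a homogeneous space for $\Or(k+1)$, Observation \ref{kucharz-bochnak_obs} yields regular approximations $\widetilde{f_i}$ of $f_i$ with $\widetilde{f_i}(\north)=\north$. To finish I need $\widetilde{f_1}\oplus\widetilde{f_2}$ to remain defined on all of $\s^n$, which is the one delicate point of the argument. But on $\s^n\setminus\supp(f_1)$ the map $f_1$ is identically $\north$, so by compactness it is bounded away from $\south$ there, and for sufficiently close approximations so is $\widetilde{f_1}$; the symmetric statement for $f_2$ combined with $\supp(f_1)\cap\supp(f_2)=\emptyset$ shows that no $x\in\s^n$ can send both $\widetilde{f_i}$ near $\south$. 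Thus $\widetilde{f_1}\oplus\widetilde{f_2}$ is defined, regular by regularity of $\oplus$, and close enough to $f_1\oplus f_2$ to represent the same homotopy class. The main obstacle is precisely this domain-of-definition issue for $\oplus$, and it is exactly where the hypothesis $M_1\cap M_2=\emptyset$ is used.
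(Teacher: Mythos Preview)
Your argument is correct and follows essentially the same route as the paper's proof: build $f_1,f_2$ via Pontryagin--Thom with disjoint supports, observe that $f_1\oplus f_2$ represents the class of $(M_1\sqcup M_2,F)$, use null-cobordance to conclude each $f_i$ is null-homotopic and hence regularly approximable, and then argue that $\widetilde{f_1}\oplus\widetilde{f_2}$ is defined and homotopic to $f_1\oplus f_2$. Your justification of the well-definedness of $\widetilde{f_1}\oplus\widetilde{f_2}$ is in fact more explicit than the paper's, and invoking Observation~\ref{kucharz-bochnak_obs} rather than Theorem~\ref{kucharz-bochnak} is harmless (the basepoint condition is not actually needed here).
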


Proposition \ref{tau_is_regular} motivates the following definition:
\begin{defi}\label{def_of_tau}
We define $\tau_n(\s^k)\subset\pi_n(\s^k)$ to consist of all the classes coming from the construction of Proposition \ref{tau_is_regular}, i.e. classes $\alpha\in\pi_n(\s^k)$ for which there exists a representative $(M,F)\in\Pt(\alpha)$ which splits into two closed submanifolds $M=M_1\sqcup M_2$, such that $(M_1,F\vert_{M_1})$ and $(M_2,F\vert_{M_2})$ are both framed cobordant to $\emptyset$.
\end{defi}

We will now study some basic properties of the sets $\tau_n(\s^k)$.
\begin{obs}
The set $\tau_n(\s^k)$ forms a subgroup of the homotopy group $\pi_n(\s^k)$.
\begin{proof}
Let $(M,F)$, $(N,G)$ be two framed submanifolds of $\R^n$ which admit splittings
\begin{align*}
    M&=M_1\sqcup M_2\\
    N&=N_1\sqcup N_2
\end{align*}
as in Definition \ref{def_of_tau}. After translation we may assume that $M$ and $N$ can be separated by a hyperplane in $\R^n$. The sum of their cobordism classes is by definition represented by $(L, H)$, where $L=M\sqcup N$ and the framing $H$ is defined by
\begin{equation*}
    H(a)=
    \begin{cases}
        F(a) &\text{ for } a\in M \\
        G(a) &\text{ for } a\in N
    \end{cases}
\end{equation*}
The manifold now splits as 
\begin{equation*}
    L=(M_1\sqcup N_1)\sqcup (M_2\sqcup N_2)
\end{equation*}
It is easy to realise that this splitting satisfies the assumptions of definition \ref{def_of_tau}.
\end{proof}
\end{obs}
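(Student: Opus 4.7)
The plan is to verify that $\tau_n(\s^k)$ contains the identity and is closed under both the group operation and the inverse, using the Pontryagin--Thom correspondence translated in Definition \ref{def_of_tau}.

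For the identity element, I would observe that the trivial class is represented by $(M,F)$ with $M=\emptyset$, which admits the splitting $M_1=M_2=\emptyset$; both pieces are vacuously null-cobordant, so $0\in\tau_n(\s^k)$.

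For closure under addition, I would take two classes $[M,F], [N,G] \in \tau_n(\s^k)$ with splittings $M=M_1\sqcup M_2$ and $N=N_1\sqcup N_2$ as in Definition \ref{def_of_tau}. Since framed cobordism classes depend only on the underlying class, I may translate $N$ so that $M$ and $N$ lie on opposite sides of an affine hyperplane in $\R^n$. The sum $[M,F]+[N,G]$ is then represented by the disjoint union $(M\sqcup N, H)$ with $H$ defined piecewise by $F$ and $G$. I would then propose the splitting $(M_1\sqcup N_1)\sqcup(M_2\sqcup N_2)$ and verify that each piece is framed null-cobordant: if $W_i$ is a framed cobordism in $\R^n\times[0,1]$ from $(M_i,F|_{M_i})$ to $\emptyset$ and similarly $V_i$ from $(N_i,G|_{N_i})$ to $\emptyset$, then after using the separating hyperplane to keep them disjoint in the cobordism, $W_i\sqcup V_i$ provides a framed null-cobordism of $M_i\sqcup N_i$.

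For closure under inverses, I would use the fact that the inverse of a class $\alpha\in\pi_n(\s^k)$ corresponds under $\Pt$ to reversing the framing (e.g., by applying a linear symmetry $\sigma$ to each framing vector tuple that reverses orientation of the normal bundle). Thus if $(M,F)$ represents $\alpha$ with splitting $M=M_1\sqcup M_2$ into null-cobordant pieces, then $(M,\sigma\circ F)$ represents $-\alpha$ with the same underlying decomposition. Applying $\sigma$ pointwise to each null-cobordism of $(M_i,F|_{M_i})$ produces a null-cobordism of $(M_i,\sigma\circ F|_{M_i})$, so the splitting still meets the requirements of Definition \ref{def_of_tau} and $-\alpha\in\tau_n(\s^k)$.

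I do not anticipate a serious obstacle: the essential point is simply that the property ``null-cobordant'' is preserved under the two natural operations (disjoint union of separated manifolds, and uniform reflection of the framing), so the subgroup check reduces to bookkeeping with the splittings.
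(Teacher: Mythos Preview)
Your proposal is correct and follows essentially the same route as the paper for closure under addition: translate to separate $M$ and $N$ by a hyperplane, represent the sum by the disjoint union, and regroup the four pieces as $(M_1\sqcup N_1)\sqcup(M_2\sqcup N_2)$. In fact you are more thorough than the paper, which only spells out the addition step; your explicit checks that $0\in\tau_n(\s^k)$ and that $\tau_n(\s^k)$ is closed under inverses (via a framing reflection carried through the null-cobordisms) are correct and fill in details the paper leaves implicit.
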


\begin{obs}
The group $\tau_n(\s^k)$ is contained in the kernel of the suspension homomorphism $\Sigma:\pi_n(\s^k)\rightarrow\pi_{n+1}(\s^{k+1})$.
\begin{proof}
In the cobordism group suspension corresponds to embedding of $\R^n$ in $\R^{n+1}$. Hence if $(M,F)$ is a framed manifold which splits as $M=M_1\sqcup M_2$, then given the additional dimension the manifolds $M_1$ and $M_2$ can be separated so that $\Sigma[(M,F)]$ becomes equal to $\Sigma[(M_1,F\vert_{M_1})]+\Sigma[(M_2,F\vert_{M_2})]=0$.
\end{proof}
\end{obs}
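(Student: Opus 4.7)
The plan is to work entirely on the framed-cobordism side via the Pontryagin--Thom isomorphism $\Pt$. First I would recall how $\Sigma$ is visible on cobordism classes: given a framed submanifold $(M,F)$ of $\R^n$ of codimension $k$ with $F=(\nu_1,\dots,\nu_k)$, its suspension corresponds, under the standard identification of $\R^{n+1}$ with $\s^{n+1}\setminus\{\north\}$, to the same set $M$ viewed inside $\R^{n+1}$ via the embedding $\R^n\cong\R^n\times\{0\}\hookrightarrow\R^{n+1}$, equipped with the framing $(\nu_1,\dots,\nu_k,e_{n+1})$. The codimension increases from $k$ to $k+1$, as required for a representative of an element of $\pi_{n+1}(\s^{k+1})$.

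Next, take a class $\alpha\in\tau_n(\s^k)$ represented by a framed $(M,F)$ with a splitting $M=M_1\sqcup M_2$ as in Definition \ref{def_of_tau}, so that each $(M_i,F\vert_{M_i})$ is framed cobordant to $\emptyset$ in $\R^n$. In $\R^{n+1}$ we now have a spare dimension. I would translate $M_1$ by $t\cdot e_{n+1}$ for large $t$; the translation is realised by the framed cylindrical cobordism $M_1\times[0,t]\subset\R^{n+1}\times[0,1]$, so it does not change the class. After the translation, a hyperplane $\{x_{n+1}=c\}$ separates the translated $M_1$ from $M_2$, which is exactly the geometric condition expressing addition in $F^{k+1}(\R^{n+1})$. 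Hence
\begin{equation*}
\Sigma\alpha=\Sigma\Pt^{-1}[(M_1,F\vert_{M_1})]+\Sigma\Pt^{-1}[(M_2,F\vert_{M_2})].
\end{equation*}

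Finally, I would observe that the suspension operation on cobordism classes preserves triviality: a framed nullcobordism $W_i\subset\R^n\times[0,1]$ of $M_i$ embeds into $\R^{n+1}\times[0,1]$ by the same $\R^n\hookrightarrow\R^{n+1}$, with its framing augmented by the constant section $e_{n+1}$, and this is a framed nullcobordism of the suspended $(M_i,F\vert_{M_i})$ in $\R^{n+1}$. Thus each suspended summand vanishes and $\Sigma\alpha=0$.

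I do not anticipate a real obstacle: everything is a matter of carefully describing the effect of $\Sigma$ on framed cobordism classes, and checking that separation-by-hyperplane in $\R^{n+1}$ realises the sum. The only point that needs a sentence of justification is the identification of $\Sigma$ with the embedding-plus-$e_{n+1}$ framing, but this is standard from the construction of $\Pt$ in \cite[Chapter IX, Section 5]{Kosinski}.
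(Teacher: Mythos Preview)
Your proposal is correct and follows essentially the same approach as the paper: both use that suspension corresponds to the embedding $\R^n\hookrightarrow\R^{n+1}$ and then exploit the extra dimension to separate $M_1$ from $M_2$, reducing $\Sigma\alpha$ to a sum of two suspended null-cobordant classes. The paper's proof is a two-line sketch of exactly this idea; you have simply supplied the details (the explicit augmented framing, the translation cobordism, and the embedding of the null-cobordisms), none of which changes the underlying argument.
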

For the notion of the Whitehead product referred to in the next observation see \cite[Chapter X, Section 7]{Whitehead_book}.
\begin{obs}
All Whitehead products are contained in $\tau_n(\s^k)$.
\begin{proof}
By the definition, given two smooth mappings
\begin{equation*}
    f:(\s^n,e)\rightarrow(\s^k,e),\;g:(\s^m,e)\rightarrow(\s^k,e)    
\end{equation*} 
the Whitehead product of their classes is represented by the following composition
\begin{center}
    \begin{tikzcd}
h:\s^{n+m-1} \arrow[r, "\phi"] & \s^n\vee\s^m \arrow[r, "f\vee g"] & \s^k
\end{tikzcd}
\end{center}
where $\phi$ is a certain attaching mapping and the wedge sum is the identification of the points $e$ on the two spheres. Requiring both $f$ and $g$ to be constant on small neighbourhoods of $\north$, and requiring $\phi$ to be smooth outside $\phi^{-1}(\north)$, we might assume that $h$ is smooth. Consider a regular value $a\in\s^k$ of $h$, different from $\north$. Its fiber under $f\vee g$ splits into two components, one contained in $\s^n$ and the other one in $\s^m$. Notices that we might right down these components as $(f\vee\const) ^{-1}(a)$ and $(\const\vee g)^{-1}(a)$, where $\const$ is the mapping constantly equal to $\north$. Then $h^{-1}(a)$ can be written as 
\begin{equation*}
    ((f\vee\const)\circ \phi) ^{-1}(a)\sqcup ((\const\vee g)\circ \phi )^{-1}(a)    
\end{equation*}
It remains to show that these two components equipped with the framing induced by $h$ are framed cobordant to the empty set. This follows from the fact that they represent Pontryagin classes of the Whitehead products $[f,\const]$ and $[\const,g]$ which are homotopically trivial.
\end{proof}
\end{obs}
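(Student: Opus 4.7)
The plan is to unwind the definition of the Whitehead product via the Pontryagin--Thom construction and observe that the natural splitting of a wedge of two punctured spheres yields a splitting of the Pontryagin representative that meets the requirements of Definition \ref{def_of_tau}.

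First, I recall that for pointed smooth maps $f:(\s^p,\north) \to (\s^k,\north)$ and $g:(\s^q,\north) \to (\s^k,\north)$, the Whitehead product $[f,g]\in \pi_{p+q-1}(\s^k)$ is represented by $h := (f \vee g) \circ \phi$, where $\phi:\s^{p+q-1} \to \s^p \vee \s^q$ is the attaching map of the top cell of $\s^p \times \s^q$. By smoothing $\phi$ away from $\phi^{-1}(\north)$ and arranging for $f$ and $g$ to be constant on small neighbourhoods of $\north$, the map $h$ can be taken to be smooth on all of $\s^{p+q-1}$.

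Next, I pick a regular value $a \in \s^k \setminus \{\north\}$ of $h$. Since $a\neq \north$, the set $(f \vee g)^{-1}(a)$ lies inside $(\s^p \vee \s^q) \setminus \{\north\} = (\s^p \setminus \{\north\}) \sqcup (\s^q \setminus \{\north\})$, so $h^{-1}(a)$ splits canonically as $M_1 \sqcup M_2$, with $M_1$ lying over the $\s^p$ factor and $M_2$ over the $\s^q$ factor. Writing $h_1 := (f \vee \const) \circ \phi$ and $h_2 := (\const \vee g) \circ \phi$, where $\const$ denotes the map constantly equal to $\north$, one sees that $M_1 = h_1^{-1}(a)$ and $M_2 = h_2^{-1}(a)$, because $h$ coincides with $h_i$ on a neighbourhood of $M_i$; the framings induced by $h$ and by $h_i$ agree on $M_i$ for the same reason.

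Finally, $h_1$ and $h_2$ represent the Whitehead products $[f,\const]$ and $[\const, g]$ respectively. Both of these vanish in the homotopy group, since the Whitehead product is bilinear and annihilates any class with a null-homotopic factor. Via the Pontryagin--Thom correspondence this means that $(M_1, F\vert_{M_1})$ and $(M_2, F\vert_{M_2})$ are framed null-cobordant, which is precisely the splitting required in Definition \ref{def_of_tau}. I do not anticipate a serious obstacle here; the main thing to verify carefully is that the canonical splitting of $h^{-1}(a)$ and the induced framings genuinely match those produced by $h_1$ and $h_2$, which reduces to the observation that $h$ and $h_i$ agree as germs along $M_i$.
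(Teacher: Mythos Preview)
Your proposal is correct and follows essentially the same argument as the paper: both represent the Whitehead product as $(f\vee g)\circ\phi$, split the preimage of a regular value $a\neq\north$ according to the two wedge summands, identify the pieces with the Pontryagin data of $(f\vee\const)\circ\phi$ and $(\const\vee g)\circ\phi$, and conclude by noting these represent the trivial Whitehead products $[f,\const]$ and $[\const,g]$. Your write-up is slightly more explicit about why the framings match (agreement of germs along $M_i$) and about bilinearity of the Whitehead product, but the strategy is identical.
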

This observation allows us to completely characterise the groups $\tau_n(\s^k)$ in the semistable homotopy range using the following result of \cite{Whitehead}:
\begin{thm}[\protect{\cite[Corollary 5.3]{Whitehead}}]
If $n<3k-2$ then every element of the kernel of the suspension $\Sigma:\pi_n(\s^k)\rightarrow\pi_{n+1}(\s^{k+1})$ can be written as a Whitehead product of a class in $\pi_{n-k+1}(\s^k)$ and the identity $[\textnormal{id}_{\s^k}]\in \pi_{k}(\s^k)$.
\end{thm}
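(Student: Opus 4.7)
My approach would be to apply James' EHP long exact sequence. Writing $\iota_k := [\id_{\s^k}] \in \pi_k(\s^k)$ for the identity class and $w_k := [\iota_k, \iota_k] \in \pi_{2k-1}(\s^k)$ for the Whitehead square, James' reduced product model for $\Omega \s^{k+1}$ has second filtration $J_2 \s^k \simeq \s^k \cup_{w_k} e^{2k}$. The associated James--Hopf invariant $H: \Omega \s^{k+1} \to \Omega \s^{2k+1}$ sits in an approximate fibration with fiber $\s^k$, yielding in the metastable range the exact sequence
\[
\pi_{n+2}(\s^{2k+1}) \xrightarrow{P} \pi_n(\s^k) \xrightarrow{\Sigma} \pi_{n+1}(\s^{k+1}) \xrightarrow{H} \pi_{n+1}(\s^{2k+1}),
\]
with exactness at $\pi_n(\s^k)$ holding whenever $n < 3k - 2$. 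Hence $\ker \Sigma = \im P$.

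The next step is to compute $\im P$ concretely. Exploiting the attaching-map description of $J_2 \s^k$, one shows that the connecting map $P$ is given, after double desuspension, by post-composition with $w_k$: for $\gamma \in \pi_n(\s^{2k-1})$ one has $P(\Sigma^2 \gamma) = w_k \circ \gamma$. Since $n < 3k - 2 \le 4k - 3$, the double suspension $\Sigma^2 : \pi_n(\s^{2k-1}) \to \pi_{n+2}(\s^{2k+1})$ is an isomorphism by Freudenthal's theorem, so every element of $\im P$ has the form $w_k \circ \gamma$ for some $\gamma$.

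Finally one invokes the classical composition identity
\[
[\alpha, \iota_k] \;=\; \pm\, w_k \circ \Sigma^{k-1}\alpha \qquad \text{for } \alpha \in \pi_{n-k+1}(\s^k),
\]
combined with Freudenthal surjectivity $\Sigma^{k-1} : \pi_{n-k+1}(\s^k) \twoheadrightarrow \pi_n(\s^{2k-1})$, valid since $n - k + 1 \le 2k - 1$ in our range. Writing $\gamma = \Sigma^{k-1}\alpha$ then gives $P(\Sigma^2 \gamma) = \pm[\alpha, \iota_k]$, realizing every kernel element as a Whitehead product of a class in $\pi_{n-k+1}(\s^k)$ with the identity $\iota_k$.

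The main obstacle is the rigorous identification of the connecting homomorphism $P$ with post-composition by $w_k$; this reduction requires a careful analysis of James' reduced product filtration and how the Hopf invariant interacts with the attaching map of $J_2 \s^k$. The remaining inputs---the exactness range for the EHP sequence, Freudenthal's suspension theorem, and the composition formula for Whitehead products---are classical and combine routinely.
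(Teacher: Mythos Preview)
The paper does not supply its own proof of this statement: it is quoted verbatim as \cite[Corollary~5.3]{Whitehead} and used as a black box to identify $\tau_n(\s^k)$ with $\ker\Sigma$ in the semistable range. So there is no argument in the paper to compare yours against.

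That said, your outline is the standard and correct route to this classical fact, and is essentially how Whitehead (and later James) obtained it. A couple of minor points worth tightening. First, the justification ``valid since $n-k+1\le 2k-1$'' for the surjectivity of $\Sigma^{k-1}:\pi_{n-k+1}(\s^k)\to\pi_n(\s^{2k-1})$ only checks the first suspension; you need each intermediate step $\Sigma:\pi_{n-k+j}(\s^{k+j-1})\to\pi_{n-k+j+1}(\s^{k+j})$ to be onto, which requires $n\le 3k+j-3$. The worst case is $j=1$, giving $n\le 3k-2$, and since you have $n<3k-2$ this is fine---but the reasoning should be stated that way. Second, the composition identity you invoke is the naturality formula $[\iota_k,\iota_k\circ\alpha]=[\iota_k,\iota_k]\circ\Sigma^{k-1}\alpha$, together with graded anticommutativity of the Whitehead product; it may be worth citing this explicitly (e.g.\ \cite[Chapter~X]{Whitehead_book}) rather than leaving it as a bare formula. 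The identification of $P$ with post-composition by $w_k$ is, as you note, the substantive input, and is handled in James' or Whitehead's original work.
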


\begin{cor}
The group $\tau_n(\s^k)$ is equal to the kernel of the suspension whenever $n<3k-2$, in particular Theorem \ref{thm_ker} holds.
\end{cor}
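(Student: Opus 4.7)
The plan is straightforward because all the substantive work has already been carried out in the preceding observations and in Proposition \ref{tau_is_regular}. I would split the corollary into the two inclusions $\tau_n(\s^k) \subset \ker \Sigma$ and $\ker \Sigma \subset \tau_n(\s^k)$, and then harvest Theorem \ref{thm_ker} as an immediate consequence.

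The inclusion $\tau_n(\s^k) \subset \ker \Sigma$ is exactly the content of the second observation of this section, so nothing further is required there. For the reverse inclusion I would invoke the cited theorem of Whitehead, which applies precisely under the hypothesis $n < 3k-2$: it states that every element $\beta \in \ker \Sigma$ can be written as a single Whitehead product $[\alpha,\id_{\s^k}]$ with $\alpha \in \pi_{n-k+1}(\s^k)$. By the third observation of this section, every Whitehead product lies in $\tau_n(\s^k)$, so $\beta \in \tau_n(\s^k)$ as required. Combining the two inclusions gives the desired equality.

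For the \emph{in particular} clause, the deduction of Theorem \ref{thm_ker} is immediate from Proposition \ref{tau_is_regular}: every class in $\tau_n(\s^k)$ admits a regular representative by construction, so chaining this with the equality $\tau_n(\s^k) = \ker \Sigma$ just established shows that every element of $\ker \Sigma$ is represented by a regular mapping, which is the statement of Theorem \ref{thm_ker}.

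There is essentially no obstacle in the corollary itself; it is a bookkeeping statement packaging the three preceding observations together with Whitehead's semistable-range decomposition. The genuine mathematical difficulty is hidden inside Proposition \ref{tau_is_regular} (which supplies regularity via the $\oplus$ construction and Observation \ref{kucharz-bochnak_obs}) and inside the identification of Whitehead products as lying in $\tau_n(\s^k)$; once those are in place, only the reference to Whitehead's result is needed to finish.
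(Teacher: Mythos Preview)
Your proposal is correct and matches the paper's approach exactly: the paper gives no separate proof of this corollary, treating it as immediate from the two preceding observations (that $\tau_n(\s^k)\subset\ker\Sigma$ and that Whitehead products lie in $\tau_n(\s^k)$) together with Whitehead's cited result, and then Theorem~\ref{thm_ker} follows since $\tau_n(\s^k)\subset\pi_n^{\alg}(\s^k)$ by construction. Your write-up simply makes these implicit steps explicit.
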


The author does not know how to characterise $\tau_n(\s^k)$ outside this semistable range; in particular it is not clear whether it is always equal to the kernel of the suspension or not. However, we do have the following scenario in which the group appears naturally:
\begin{lem}\label{distributivity_lemma}
Let $f:(\s^n,e)\rightarrow(\s^m,e),\;g,h:(\s^m,e)\rightarrow(\s^k,e)$. Then
\begin{equation*}
    [g\circ f]+[h\circ f]\equiv ([g]+[h])\circ [f] \mod \tau_n(\s^k)
\end{equation*}
where the composition on the right hand side is the composition product in homotopy (defined as in \cite[Chapter X, Section 8]{Whitehead_book}).
\begin{proof}
We might assume that $g$ is constantly equal to $e$ on the hemipshere $\{x_2\leq 0\}$ while $h$ is constantly equal to $e$ on the hemisphere $\{x_2\geq 0\}$ and that all three of the mappings in the lemma are smooth. The class on the right hand side of the equation is then represented by $\phi\circ f$, where $\phi$ is defined by
\begin{equation*}
    \phi(x)=\begin{cases}
        g(x) & x_2\leq 0 \\
        h(x) & x_2\geq 0
    \end{cases}
\end{equation*}
Let $a\in \s^k$ be a regular of value of the mapping $\phi\circ f$, different from $e$. Let the preimage of $a$ through $g\circ f$ with the induced framing be denoted $(M,F)$, and similarly with $(N,G)$ for the preimage through $h\circ f$. Let $(M',F')$ be the reflection of $(M,F)$ through a hyperplane far away from the origin, and similarly for $(N',G')$. Then by the definition $(M',F')\in\Pt(-[g\circ f]),(N',G')\in\Pt(-[h\circ f])$. On the other hand the preimage of $a$ through $\phi\circ f$ with the induced framing is $(M\sqcup N,H)$, where
\begin{equation*}
    H(a)=
    \begin{cases}
        F(a) &\text{ for } a\in M \\
        G(a) &\text{ for } a\in N
    \end{cases}
\end{equation*}
Finally, the Pontryagin class of $[\phi\circ f]-[g\circ f]-[h\circ f]$
is represented by $(L,H')$, where $L=M\sqcup N\sqcup M' \sqcup N'$ and
\begin{equation*}
    H'(a)=
    \begin{cases}
        H(a) &\text{ for } a\in M\sqcup N \\
        F'(a) &\text{ for } a\in M' \\
        G'(a) &\text{ for } a\in N' 
    \end{cases}
\end{equation*}
It follows that the splitting $L=(M\sqcup M')\sqcup (N\sqcup N')$ satisfies the assumptions of Definition \ref{def_of_tau}.
\end{proof}
\end{lem}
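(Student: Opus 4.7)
The plan is to use the Pontryagin--Thom isomorphism to translate the identity into a statement about framed cobordism classes in $\R^n$, and to exhibit the defect $[\phi\circ f]-[g\circ f]-[h\circ f]$ as an element of $\tau_n(\s^k)$ via a natural two-piece splitting.

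First I would choose smooth representatives of $g$ and $h$ that collapse opposite hemispheres of $\s^m$ to $\north$, say $g\equiv \north$ on $\{x_2\le 0\}$ and $h\equiv \north$ on $\{x_2\ge 0\}$. Then the piecewise map $\phi$ equal to $g$ on one hemisphere and $h$ on the other is smooth and represents $[g]+[h]$, so $\phi\circ f$ is a representative of the right-hand side of the lemma. For a regular value $a\ne\north$ of $\phi\circ f$, the fact that $g^{-1}(a)$ lies in $\{x_2>0\}$ while $h^{-1}(a)$ lies in $\{x_2<0\}$ forces $(\phi\circ f)^{-1}(a)$ to split, as a framed submanifold of $\R^n$, into a disjoint union $M\sqcup N$ with $M=(g\circ f)^{-1}(a)$ and $N=(h\circ f)^{-1}(a)$, and the framing induced by $\phi\circ f$ on each part coincides with the one coming from $g\circ f$ or $h\circ f$ (because $\phi$ agrees with the relevant map on an entire neighbourhood of each hemisphere).

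Next I would reflect $(M,F)$ and $(N,G)$ through hyperplanes placed far from the origin, producing disjoint copies $(M',F')$ and $(N',G')$ whose induced framings realise $-[g\circ f]$ and $-[h\circ f]$ respectively. The Pontryagin class of $[\phi\circ f]-[g\circ f]-[h\circ f]$ is then represented by $L := M\sqcup N\sqcup M'\sqcup N'$ with the obvious combined framing. The crucial move is to group the components not along the $g/h$ dichotomy but as
\[
L=(M\sqcup M')\sqcup(N\sqcup N'),
\]
since each grouped pair is a framed manifold together with its reflected copy, and is therefore framed null-cobordant. By Definition \ref{def_of_tau} this places the defect in $\tau_n(\s^k)$, which is exactly the claim.

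The main obstacle I anticipate is bookkeeping at the level of framings: one must verify both that replacing $\phi\circ f$ locally by $g\circ f$ (respectively $h\circ f$) leaves the induced framing unchanged on $M$ (respectively $N$), and that reflection through a distant hyperplane produces a representative of the additive inverse in the framed cobordism group, so that $(M',F')\in\Pt(-[g\circ f])$ and $(N',G')\in\Pt(-[h\circ f])$ as asserted. Both points are routine consequences of the Pontryagin construction, but they are the places where the argument would fail if the flat regions of $g$ and $h$ were not chosen cleanly enough for $\phi$ to be genuinely smooth with matching derivatives along the equator.
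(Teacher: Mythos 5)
Your proposal is correct and follows essentially the same route as the paper's proof: representatives of $g$ and $h$ flattened on complementary hemispheres, the piecewise map $\phi$, the decomposition of the regular fibre as $M\sqcup N$, reflection through distant hyperplanes to realise the inverses, and the regrouping $L=(M\sqcup M')\sqcup(N\sqcup N')$ into two framed null-cobordant pieces as required by Definition \ref{def_of_tau}. The framing bookkeeping you flag is exactly the (routine) verification the paper also relies on.
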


We can now deduce Theorem \ref{k-odd} from Lemma \ref{distributivity_lemma}:
\begin{proof}[Proof of Theorem \ref{k-odd}]
We consider the following classical mapping $\phi:\s^k\rightarrow\s^k$
\begin{center}
    \begin{tikzcd}
\s^k \arrow[r, hook] & \R^{k+1}\backslash \{0\} \arrow[r] & \mathbb{P}^k(\R) \arrow[r] & \s^k
\end{tikzcd}
\end{center}
where the second arrow is the projection, and the last one is the regular extension of the stereographic projection from an affine chart of the projective space. As can be easily verified, the mapping is of degree $2$, because $k$ is odd. Consider now any continuous mapping $f:(\s^n,e)\rightarrow(\s^k,e)$. The homotopy class of $\phi\circ f$ is algebraic, because we can apply the Stone-Weierstrass theorem to the dashed arrow in the following diagram:
\begin{center}
    \begin{tikzcd}
\s^n \arrow[r, "f"] \arrow[rr, dashed, bend left] & \s^k \arrow[r, hook] & \R^{k+1}\backslash\{0\} \arrow[r] & \mathbb{P}^k(\R) \arrow[r] & \s^k
\end{tikzcd}
\end{center}
Now by Lemma \ref{distributivity_lemma}:
\begin{equation*}
    [\phi\circ f]=([\text{id}]+[\text{id}])\circ [f]\equiv 2[f] \mod \tau_n(\s^k)
\end{equation*}
so
\begin{equation*}
    2[f]\in [\phi\circ f]+\tau_n(\s^k)\subset \pi_n^\alg(\s^k).
\end{equation*}
\end{proof}

\section{Mappings into linear algebraic groups}
When dealing with homotopy groups, every linear algebraic group in this section is treated as a pointed topological space with the identity matrix as the highlighted point.

The following result will play a crucial role:
\begin{thm}\label{large_m}
For every $n$, for sufficiently large $m$ every continuous mapping from $\s^n$ into $\SO(m)$ or into $\U(m)$ is homotopic to a regular one.
\begin{proof}
In the case of the unitary group this is proven in \cite[Proposition 6.1 and Theorem 6.2]{real_homogeneous}. In the case of the special orthogonal group the conclusion follows from the following approach:

From the Bott periodicity theorem we get that for $m$ large enough, the homotopy group $\pi_n(\SO(m))$ is either trivial or cyclic. In the non-trivial case it follows from \cite[Proposition 5.7]{real_homogeneous} that it suffices to only represent the generator of the homotopy group. Such a representation is established in \cite[Corollary 1.6]{Baum_1967}.
\end{proof}
\end{thm}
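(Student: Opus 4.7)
The plan is to treat the two cases ($\SO(m)$ and $\U(m)$) separately, and in each case to reduce the problem to representing a single generator of a cyclic homotopy group by a regular mapping.

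The starting point in both cases is Bott periodicity: for $m$ sufficiently large relative to $n$, the homotopy groups $\pi_n(\SO(m))$ and $\pi_n(\U(m))$ stabilize, and the stable values are cyclic (one of $0$, $\mathbb{Z}/2$, or $\mathbb{Z}$ in the orthogonal case; either $0$ or $\mathbb{Z}$ in the unitary case). If the stable group is trivial there is nothing to prove. Otherwise, I would first observe that $\pi_n^\alg(G)$ is always a subgroup of $\pi_n(G)$ for a linear real algebraic group $G$: Theorem \ref{kucharz-bochnak} together with Observation \ref{kucharz-bochnak_obs} provides basepoint-preserving regular approximations, while the pointwise product of two regular mappings into $G$ is again regular and represents the group operation in $\pi_n(G)$. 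Since the stable homotopy group is cyclic, this reduces the problem to exhibiting a single regular representative of a chosen generator.

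For the unitary case such regular representatives are already in the literature: Bochnak and Kucharz construct them in \cite[Proposition 6.1 and Theorem 6.2]{real_homogeneous}, so this case is handled directly by citation.

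The main obstacle is the construction of a regular representative of the Bott generator of $\pi_n(\SO(m))$ in the orthogonal case. A natural geometric route proceeds via Clifford algebras and the spinor representation: explicit polynomial mappings from spheres into $\SO$-groups can be built from products of Clifford elements, and these turn out to realize the generators predicted by Bott periodicity. Fortunately this construction has already been carried out by Baum in \cite[Corollary 1.6]{Baum_1967}, and I would invoke his result to complete the proof.
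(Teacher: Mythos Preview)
Your proposal is correct and follows essentially the same route as the paper: Bott periodicity to get cyclicity, reduce to a single generator via the subgroup property of $\pi_n^\alg(G)$, then invoke \cite{Baum_1967} for $\SO$ and \cite{real_homogeneous} for $\U$. The only cosmetic difference is that where the paper cites \cite[Proposition 5.7]{real_homogeneous} for the fact that $\pi_n^\alg(G)$ is a subgroup, you supply the (easy) argument directly using pointwise multiplication in $G$; both are perfectly fine.
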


Consider the mapping $p:\SO(n)\rightarrow \s^{n-1}$ associating to each element of $\SO(n)$ its first column vector. Define $U^+:=\s^{n-1}\backslash\{\south\}$.

\begin{obs}\label{existance_of_section_SO}
There exists a regular mapping $s:U^+\rightarrow \SO(n)$ satisfying
\begin{align*}
    p\circ s &=\id_{U^+}\\
    s(\north) &=\textnormal{the identity matrix}
\end{align*}
\begin{proof}
The mapping can be defined in the following way: for a given point $a=(x_1,\dots,x_n)\in\s^{n-1}$ consider the two following hyperplanes in $\R^n$:
\begin{align*}
    H_1&:=\{x\in\R^n:x\perp \north\}\\
    H_2&:=\{x\in\R^n:x\perp a+e\}
\end{align*}
Then associate to $a$ the isometry
\begin{equation*}
    \sym_{H_2}\circ \sym_{H_1}
\end{equation*}
where $\sym_{H_i}$ is the symmmetry through the hyperplane $H_i$ for $i=1,2$.

Explicitly in the matrix form this isometry is given by the following formula: its $(i,j)$-th entry is equal to
\begin{equation*}
    \alpha_{i,j}:=
    \begin{cases}
        x_i & \textnormal{ if }j=1 \\
        -x_j & \textnormal{ if }i=1,\;j\neq 1 \\
        1-\frac{x_i^2}{1+x_1} & \textnormal{ if }i=j,\;i,j>1 \\
        -\frac{x_ix_j}{1+x_1} & \textnormal{ if }i\neq j,\;i,j>1 
    \end{cases}
\end{equation*}

Clearly it is a regular mapping of $a$. The conditions of the observation are verified directly from the definition.
\end{proof}
\end{obs}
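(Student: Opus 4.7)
The plan is to construct $s(a)$ explicitly as a composition of two reflections, following the Cartan–Dieudonné philosophy that any rotation taking one unit vector to another can be written as a product of two hyperplane reflections. The first reflection sends $\north$ to $-\north$; the second sends $-\north$ to $a$. Their composition lies in $\SO(n)$ (each reflection has determinant $-1$) and maps $\north$ to $a$, which is exactly what $p\circ s=\id_{U^+}$ requires.

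Concretely, I would define $s(a):=H_{a+\north}\circ H_{\north}$, where for a nonzero vector $v\in\R^n$ the Householder reflection is
\begin{equation*}
    H_v(x):=x-\frac{2(v\cdot x)}{v\cdot v}\,v.
\end{equation*}
The first factor $H_{\north}$ is a fixed orthogonal reflection sending $\north\mapsto -\north$. For the second factor, a direct computation using $|a|=1$ gives $(a+\north)\cdot(a+\north)=2(1+a_1)$, where $a_1$ is the first coordinate of $a$, and $(a+\north)\cdot(-\north)=-(1+a_1)$. Plugging in yields $H_{a+\north}(-\north)=-\north+(a+\north)=a$, as desired.

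Regularity is the step I would check next, and it is the only step that requires attention. The entries of $H_{a+\north}$ are rational functions in the coordinates of $a$ whose denominator is $(a+\north)\cdot(a+\north)=2(1+a_1)$. Since $U^+=\s^{n-1}\setminus\{\south\}$ is precisely the locus where $1+a_1\neq 0$, the denominator never vanishes on $U^+$, so every entry of $s(a)$ is a regular function of $a\in U^+$. The normalization condition $s(\north)=I$ follows because at $a=\north$ we have $a+\north=2\north$, so $H_{a+\north}=H_{\north}$ and the composition is $H_{\north}\circ H_{\north}=I$.

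There is no real obstacle here; the only subtle point is ensuring we use the unnormalized form of the Householder reflection, since normalizing $a+\north$ would introduce a square root $\sqrt{2(1+a_1)}$ and destroy regularity. Using the self-correcting form $I-\frac{2vv^T}{v\cdot v}$ avoids this entirely, and the explicit matrix formula can then be read off componentwise to confirm polynomiality modulo the single denominator $1+a_1$.
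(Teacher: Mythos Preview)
Your proposal is correct and is essentially identical to the paper's proof: the paper defines $s(a)=\sym_{H_2}\circ\sym_{H_1}$ with $H_1=\north^\perp$ and $H_2=(a+\north)^\perp$, which is exactly your composition $H_{a+\north}\circ H_{\north}$ of Householder reflections, and then writes out the same matrix entries with denominator $1+x_1$. Your remark that one must use the unnormalized form $I-\frac{2vv^\top}{v\cdot v}$ to preserve regularity is precisely the point that makes the paper's explicit formula work.
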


Now suppose that $n$ is even, so it can be written as $n=2k$. Associating $\R^{n}$ with $\C^{k}$, $U(k)$ is naturally embedded in $\SO(n)$. In such a case the observation can be strengthened to give the following:
\begin{obs}
There exists a regular mapping $s':U^+\rightarrow \U(k)$ satisfying
\begin{align*}
    p\circ s' &=\id_{U^+}\\
    s'(\north) &=\textnormal{the identity matrix}
\end{align*}
\begin{proof}
In this case the mapping can be defined in the following way: for a given point $a=(x_1,\dots,x_k)\in\s^{n-1}\subset \C^k$ consider the two following (complex) hyperplanes in $\C^k$:
\begin{align*}
    H_1&:=\{x\in\C^k:x\perp \north\}\\
    H_2&:=\{x\in\C^k:x\perp a+e\}
\end{align*}
Then associate to $a$ the isometry
\begin{equation*}
    \rot_{H_2}\circ \sym_{H_1}
\end{equation*}
where $\sym_{H_1}$ is the symmetry through $H_1$, while $\rot_{H_2}$ is the rotation fixing $H_2$ and taking the point $\south$ to $a$.

Explicitly in the complex matrix form this isometry is given by the following formula: its $(i,j)$-th entry is equal to
\begin{equation*}
    \alpha_{i,j}:=
    \begin{cases}
        x_i & \textnormal{ if }j=1 \\
        -\frac{1+x_1}{1+\bar{x}_1}\bar{x}_j & \textnormal{ if }i=1,\;j\neq 1 \\
        1-\frac{|x_i|^2}{1+\bar{x}_1} & \textnormal{ if }i=j,\;i,j>1 \\
        -\frac{x_i\bar{x}_j}{1+\bar{x}_1} & \textnormal{ if }i\neq j,\;i,j>1 
    \end{cases}
\end{equation*}
Again, the conditions of the observation are verified easily.
\end{proof}
\end{obs}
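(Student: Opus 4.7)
The plan is to recognize the stated matrix formula as the column expansion of the geometric composition $\rot_{H_2}\circ\sym_{H_1}$ and then directly verify the four required properties: regularity on $U^+$, unitarity, $p\circ s'=\id_{U^+}$, and $s'(\north)=I$.

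First I would realize $\sym_{H_1}$ as the Hermitian reflection $v\mapsto v-2\langle v,\north\rangle\north$ on $\C^k$, where $\langle\cdot,\cdot\rangle$ is the standard Hermitian form; this map is unitary, fixes $H_1$ pointwise, and swaps $\north$ and $\south$. Next I would argue that $\rot_{H_2}$ is the unique unitary transformation fixing $H_2$ pointwise and sending $\south$ to $a$: since $H_2^\perp$ is the complex line spanned by $a+\north$ and the vectors $a$ and $\south$ differ by an element of $H_2^\perp$, such a map exists, and it is forced to act on $H_2^\perp$ by multiplication by a unit-modulus scalar $\lambda$. A short computation identifies $\lambda=-(1+x_1)/(1+\bar{x}_1)$, well-defined and of modulus one on $U^+$.

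With this in hand I would compute the columns of $\rot_{H_2}\circ\sym_{H_1}$. For $j=1$, $\sym_{H_1}(e_j)=\south$ and $\rot_{H_2}(\south)=a$, which gives the first column $a$ and simultaneously shows $p\circ s'=\id_{U^+}$. For $j>1$, the vector $e_j$ is fixed by $\sym_{H_1}$, and then $\rot_{H_2}(e_j)=e_j+(\lambda-1)\Pi(e_j)$, where $\Pi$ is the Hermitian projection onto $H_2^\perp$. A direct calculation using $\Pi(e_j)=\bar{x}_j(a+\north)/|a+\north|^2$ together with $|a+\north|^2=2+x_1+\bar{x}_1$ recovers exactly the claimed formulas for $\alpha_{1,j}$ and for $\alpha_{i,j}$ with $i,j>1$. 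Unitarity is then automatic from the geometric construction, and plugging $a=\north$ (so $x_1=1$ and $x_j=0$ for $j>1$) into the formula yields the identity matrix.

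Finally, for regularity I would observe that the only denominator appearing in the formula is $1+\bar{x}_1$; rationalizing by multiplying numerator and denominator by $1+x_1$ replaces it with $|1+x_1|^2=(1+\mathrm{Re}\,x_1)^2+(\mathrm{Im}\,x_1)^2$, a real polynomial in the real coordinates of $a$ that vanishes only when $x_1=-1$, which on the sphere forces $a=\south$. The main subtlety is not any single step but keeping the complex conjugations straight while deriving $\lambda$ and $\Pi(e_j)$; once this is handled correctly, each of the four verifications is essentially a one-line check.
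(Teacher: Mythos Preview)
Your proposal is correct and follows exactly the paper's approach: the same geometric description $\rot_{H_2}\circ\sym_{H_1}$ and the same explicit matrix formula. The only difference is that you actually carry out the verifications (computing $\lambda$, expanding the columns, and checking regularity), whereas the paper simply records the formula and declares that ``the conditions of the observation are verified easily.''
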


For every $n$, we now consider $\SO(n-1)$ to be naturally embedded in $\SO(n)$ as the preimage of $\north$ through $p$. Similarly for every $n$ we consider $\U(n-1)$ as being embedded in $\U(n)$ as the preimage of $\north$ through $p$.

\begin{cor}\label{retraction}
There exists a Zariski open neighbourhood $A$ of $\SO(n-1)$ in $\SO(n)$ and a regular retraction $r:A\rightarrow \SO(n-1)$. Similarly there exists a Zariski open neighbourhood $A'$ of $\U(n-1)$ in $\U(n)$ and a regular retraction $r':A'\rightarrow \U(n-1)$.
\begin{proof}
Define $A:=\{g\in\SO(n):p(g)\neq \south\}$ and consider
\begin{equation*}
    r:A\ni g\mapsto (s\circ p(g))^\top\cdot g\in \SO(n)
\end{equation*}
where the dot denotes matrix multiplication and $\top$ denotes matrix transposition. This clearly is a well defined regular mapping. If $g\in\SO(n-1)$ then $s\circ p(g)=s(\north)=\textnormal{the identity}$, so $r(g)=g$. For a general $g$
\begin{equation*}
    p(r(g))=(s\circ p(g))^\top\cdot p(g)=\north
\end{equation*}
by the definition of $s$. This shows that $r$ is indeed a retraction.

The case of the unitary group is treated analagously. Again define $A':=\{g\in\U(n):p(g)\neq \south\}$ and consider
\begin{equation*}
    r':A'\ni g\mapsto (s'\circ p(g))^\ast\cdot g\in \U(n)
\end{equation*}
where the dot denotes matrix multiplication and the star denotes Hermitian transposition. The fact that $r'$ is a retraction can now be verified the same way as it was done for $r$.
\end{proof}
\end{cor}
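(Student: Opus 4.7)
The plan is to use the regular sections $s$ and $s'$ constructed in the preceding observations to \emph{undo} the first-column data of a matrix in $\SO(n)$ or $\U(n)$, producing a matrix whose first column is $\north$, hence an element of the embedded subgroup.

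For the orthogonal case, I would set $A := p^{-1}(U^+) = \{g\in \SO(n) : p(g)\neq \south\}$, which is Zariski open in $\SO(n)$ because $U^+$ is Zariski open in $\s^{n-1}$ and $p$ is regular; it visibly contains $\SO(n-1)$ since $p(g) = \north$ for $g\in \SO(n-1)$. Then I would define
\[
    r: A \to \SO(n), \qquad r(g) := s(p(g))^\top \cdot g,
\]
which is manifestly regular. Two things need to be verified: that $r$ lands in $\SO(n-1)$, and that it fixes $\SO(n-1)$ pointwise. For the first, note that by the relation $p\circ s = \id_{U^+}$ the orthogonal matrix $s(p(g))$ has $p(g)$ as its first column, so the orthogonality identity $s(p(g))^\top s(p(g)) = I$ read off on the first column gives $s(p(g))^\top\cdot p(g) = \north$; thus the first column of $r(g)$ is $\north$, i.e.\ $r(g)\in \SO(n-1)$. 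For the second, if $g\in \SO(n-1)$ then $p(g) = \north$ and $s(\north)$ is the identity matrix, so $r(g) = g$.

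The unitary case would be handled identically: set $A' := \{g\in \U(n) : p(g)\neq \south\}$ and define $r'(g) := s'(p(g))^{\ast}\cdot g$, with $\ast$ denoting Hermitian conjugation. The verification goes through unchanged, using $s'(p(g))^{\ast} s'(p(g)) = I$ in place of the orthogonal relation and $s'(\north) = I$.

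I do not expect any serious obstacle: all of the genuine work has been absorbed into constructing the regular sections $s$ and $s'$ in the previous two observations, and the corollary reduces to the one-line identity that the transpose (resp.\ conjugate transpose) of an orthogonal (resp.\ unitary) matrix sends its own first column to $\north$.
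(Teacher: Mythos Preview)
Your proof is correct and follows essentially the same approach as the paper: both define $A=p^{-1}(U^+)$, set $r(g)=s(p(g))^{\top}g$ (and $r'(g)=s'(p(g))^{\ast}g$ in the unitary case), and verify the retraction property via $s(\north)=I$ together with the fact that $s(p(g))^{\top}$ sends $p(g)$ to $\north$. Your justification of the last point via the orthogonality relation $s(p(g))^{\top}s(p(g))=I$ is exactly what the paper means when it writes ``by the definition of $s$''.
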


\begin{prop}\label{O_algebraic}
For every $n$ and $k$, every continuous mapping from $\s^n$ to $\SO(k)$ or $\Or(k)$ is homotopic to a regular one.
\begin{proof}
For the case of the special orthogonal group consider any mapping $f:\s^k\rightarrow \SO(n)$. Using Theorem \ref{large_m} we can find such $m>k$ that every mapping from $\s^n$ to $\SO(m)$ is homotopic to a regular one. Hence using Theorem \ref{kucharz-bochnak} we can find a close regular approximation $g$ of $i\circ f$, where $i$ is the canonical embedding of $\SO(k)$ in $\SO(m)$. Consider 
\begin{equation*}
    r_{k+1}\circ \dots \circ r_m\circ g
\end{equation*}
where $r_i$ is the retraction from a neighbourhood of $\SO(i-1)$ in $\SO(i)$ onto $\SO(i-1)$ from Corollary \ref{retraction}. If $g$ is sufficiently close to $i\circ f$ then the composition makes sense and is a close regular approximation of $f$.

The claim now follows also for the orthogonal group, as it consists of two irreducible components both isomorphic (as affine varieties) to $\SO(k)$.
\end{proof}
\end{prop}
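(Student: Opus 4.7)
The plan is to combine Theorem \ref{large_m}, which handles $\SO(m)$ for large $m$, with Corollary \ref{retraction}, which provides regular retractions at each level of the inclusion chain $\SO(k)\subset\SO(k+1)\subset\cdots\subset\SO(m)$. The key observation is that $\SO(k)$ embeds canonically into $\SO(m)$ for any $m\geq k$, so I can lift a mapping into $\SO(k)$ to a large orthogonal group where Theorem \ref{large_m} applies, then come back down via the iterated retractions.

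Concretely, given a continuous $f:\s^n\to\SO(k)$, I would first choose $m\geq k$ large enough so that Theorem \ref{large_m} ensures every continuous map $\s^n\to\SO(m)$ is homotopic to a regular one. Writing $i:\SO(k)\hookrightarrow\SO(m)$ for the canonical block inclusion, $i\circ f$ is thus homotopic to a regular mapping, and Theorem \ref{kucharz-bochnak} then supplies arbitrarily close regular approximations $g:\s^n\to\SO(m)$ of $i\circ f$ in the compact-open topology.

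Next, I would iterate the retractions: writing $r_j:A_j\to\SO(j-1)$ for the regular retraction from the Zariski open neighbourhood $A_j$ of $\SO(j-1)$ in $\SO(j)$ provided by Corollary \ref{retraction}, the composition $\rho:=r_{k+1}\circ r_{k+2}\circ\cdots\circ r_m$ is a regular retraction onto $\SO(k)$, defined on some Zariski open subset $A\subset\SO(m)$ containing $\SO(k)$. Since $\s^n$ is compact and $(i\circ f)(\s^n)\subset\SO(k)\subset A$, for $g$ sufficiently close to $i\circ f$ we have $g(\s^n)\subset A$, so $\widetilde{f}:=\rho\circ g$ is a well-defined regular mapping $\s^n\to\SO(k)$. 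For close enough $g$, the map $\widetilde{f}$ is close to $f$ and hence homotopic to it.

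The orthogonal case reduces to the special orthogonal one: $\Or(k)$ consists of two irreducible components, $\SO(k)$ and its coset of matrices of determinant $-1$, which are isomorphic as real affine varieties via multiplication by a fixed matrix of determinant $-1$; since $\s^n$ is connected, any continuous $f:\s^n\to\Or(k)$ lands in a single component, and the $\SO(k)$ case applies after transporting that component to $\SO(k)$. The main subtlety to check is that the iterated composition $\rho$ is indeed defined on a Zariski open neighbourhood of $\SO(k)$ in $\SO(m)$ large enough that every sufficiently close approximation of $i\circ f$ maps into it. This is a routine inductive verification: each $r_j$ is defined on a Zariski open set and sends $\SO(j-1)$ to itself, so the successive preimages stay open and continue to contain $\SO(k)$; once this is granted, compactness of $\s^n$ delivers the required uniform bound on how close $g$ must be.
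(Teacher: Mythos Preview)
Your proposal is correct and follows essentially the same argument as the paper: lift $f$ to $\SO(m)$ for large $m$ via the canonical inclusion, apply Theorem~\ref{large_m} together with Theorem~\ref{kucharz-bochnak} to get a close regular approximation $g$ of $i\circ f$, and then project back to $\SO(k)$ with the iterated regular retractions $r_{k+1}\circ\cdots\circ r_m$ of Corollary~\ref{retraction}; the $\Or(k)$ case is likewise reduced to $\SO(k)$ via the two components. Your write-up even spells out a couple of points (the inductive check that the domain of the composite retraction is Zariski open and contains $\SO(k)$, and the use of compactness of $\s^n$) that the paper leaves implicit.
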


\begin{prop}\label{U_algebraic}
For every $n$ and $k$, every continuous mapping from $\s^n$ to $\U(k)$ or $\SU(k)$ is homotopic to a regular one.
\begin{proof}
The case of the unitary group is treated in the same manner as the case of the special orthogonal group.

The claim also follows for the special unitary group, because it is a regular retract of $\U(k)$ with the retraction given by
\begin{equation*}
    g\mapsto g\cdot
    \begin{bmatrix}
    \det(g)^{-1} & 0 & \cdots & 0\\
    0 & 1 & \cdots & 0 \\
    \vdots & \vdots & \ddots & \vdots \\
    0 & 0& \cdots & 1
    \end{bmatrix}
\end{equation*}
\end{proof}
\end{prop}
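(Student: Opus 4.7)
The plan is to mirror exactly the argument used for Proposition \ref{O_algebraic}, substituting unitary groups for special orthogonal groups throughout. All the necessary ingredients for this transplant are already in place in the preceding sections: Theorem \ref{large_m} gives approximability of maps $\s^n \to \U(m)$ for $m$ large; Corollary \ref{retraction} provides a regular retraction $r'_i$ from a Zariski open neighbourhood of $\U(i-1)$ in $\U(i)$ onto $\U(i-1)$; and Theorem \ref{kucharz-bochnak} produces regular approximations to null-homotopic maps into homogeneous spaces, which $\U(m)$ is.

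For the unitary case, given $f:\s^n\rightarrow \U(k)$, I would pick $m>k$ large enough that every continuous map $\s^n\rightarrow \U(m)$ is homotopic to a regular one. Writing $i:\U(k)\hookrightarrow \U(m)$ for the natural embedding, Theorem \ref{kucharz-bochnak} yields a regular map $g:\s^n\rightarrow \U(m)$ arbitrarily close to $i\circ f$. Provided the approximation is close enough, the image of $g$ lies in the Zariski open set on which the iterated composition $r'_{k+1}\circ r'_{k+2}\circ\dots\circ r'_m$ is defined, and the resulting regular map $\s^n\rightarrow \U(k)$ is a close approximation of $f$, hence homotopic to it.

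For $\SU(k)$ the reduction is even cleaner: $\SU(k)$ sits inside $\U(k)$ as a regular retract via
\begin{equation*}
    g\mapsto g\cdot
    \begin{bmatrix}
    \det(g)^{-1} & 0 & \cdots & 0\\
    0 & 1 & \cdots & 0 \\
    \vdots & \vdots & \ddots & \vdots \\
    0 & 0& \cdots & 1
    \end{bmatrix}
\end{equation*}
which is well defined and regular because $\det:\U(k)\rightarrow \s^1$ is regular and nonvanishing. Given $f:\s^n\rightarrow \SU(k)$, view $f$ as a map into $\U(k)$, apply the previous paragraph to find a regular homotopic approximation $\widetilde f:\s^n\rightarrow \U(k)$, and then post-compose with the retraction to land back in $\SU(k)$.

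There is no real obstacle here beyond bookkeeping — the whole argument is a parallel copy of the special orthogonal case. The only step that genuinely uses something specific to the unitary setting is the formula for the retraction $\U(k)\rightarrow \SU(k)$, which must be checked to be regular (it is, since $\det^{-1}$ on $\U(k)$ equals the complex conjugate of $\det$, a polynomial map). One should also remark that closeness of approximations is preserved under all the retractions, so the final composition indeed represents the homotopy class of $f$ by \cite[Chapter III, Theorem 2.5]{Kosinski}.
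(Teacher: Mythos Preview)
Your proposal is correct and follows exactly the paper's approach: the $\U(k)$ case is a verbatim transplant of the $\SO(k)$ argument using Theorem \ref{large_m}, Theorem \ref{kucharz-bochnak}, and the unitary retractions $r'_i$ from Corollary \ref{retraction}, and the $\SU(k)$ case is handled via the same explicit regular retraction $\U(k)\to\SU(k)$. (One small wording slip: Theorem \ref{kucharz-bochnak} applies to maps homotopic to a regular map, not only null-homotopic ones---but you use it correctly in the actual argument.)
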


Propositions \ref{O_algebraic} and \ref{U_algebraic} together cover Theorem \ref{thm_groups}.

Our results regarding mappings into algebraic groups have some interesting consequences for mappings between spheres as well. Note that the mapping $p$ induces a morphism in homotopy
\begin{equation*}
    p_\ast:\pi_n(\SO(k+1))\rightarrow \pi_n(\s^k)
\end{equation*}
which quite obviously takes algebraic classes to algebraic ones. Since we have just shown that
\begin{equation*}
    \pi_n^\alg(\SO(k+1))=\pi_n(\SO(k+1))
\end{equation*}
we obtain

\begin{obs}
The image of $p_\ast$ is contained in $\pi_n^\alg(\s^k)$.
\end{obs}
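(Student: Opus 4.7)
The plan is to unwind the definitions and use Proposition \ref{O_algebraic} directly, together with the fact that $p$ itself is a regular pointed map.

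First, I would fix a class $\alpha \in \pi_n(\SO(k+1))$ and observe that by Proposition \ref{O_algebraic} combined with the corollary immediately after Observation \ref{kucharz-bochnak_obs}, we have the equality $\pi_n^\alg(\SO(k+1), I) = \pi_n(\SO(k+1), I)$, where $I$ denotes the identity matrix. Hence $\alpha$ admits a regular pointed representative $f:(\s^n,\north)\rightarrow(\SO(k+1),I)$.

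Next, I would verify that $p:\SO(k+1)\rightarrow\s^k$, being the projection onto the first column, is a regular mapping; moreover it is pointed, since the first column of the identity matrix is precisely $\north\in\s^k$. Consequently the composition
\begin{equation*}
    p\circ f:(\s^n,\north)\rightarrow(\s^k,\north)
\end{equation*}
is itself regular and pointed, and by functoriality it represents $p_\ast(\alpha)\in\pi_n(\s^k)$. This exhibits $p_\ast(\alpha)$ as an algebraic homotopy class, so $p_\ast(\alpha)\in\pi_n^\alg(\s^k)$.

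There is no real obstacle here; the whole content of the statement is packaged inside Proposition \ref{O_algebraic} and the basepoint-preservation afforded by Observation \ref{kucharz-bochnak_obs}. The only point that deserves a brief mention in the writeup is that one must insist on a \emph{pointed} regular representative of $\alpha$, which is why the equivalence between Conditions (1) and (2) in the corollary following Observation \ref{kucharz-bochnak_obs} is being invoked rather than the weaker statement that every continuous map into $\SO(k+1)$ is homotopic to a regular one without regard to basepoints.
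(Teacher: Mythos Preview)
Your argument is correct and matches the paper's reasoning exactly: the paper observes that $p$ is regular, hence $p_\ast$ takes algebraic classes to algebraic ones, and then invokes Proposition \ref{O_algebraic} to conclude that every class in $\pi_n(\SO(k+1))$ is algebraic. Your additional care about basepoints via the corollary after Observation \ref{kucharz-bochnak_obs} is appropriate and makes explicit a detail the paper leaves implicit.
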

This observation turns out to be extremely useful.

\begin{obs}
$p_\ast$ is surjective whenever $k=3$ or $k=7$.
\begin{proof}
It is well known that $\s^3$ and $\s^7$ are parallelizable. This is equivalent to the fact that there exists a global continuous section $\tau:\s^i\rightarrow\SO(i+1)$ of $p$ for $i=3,7$, which in turn implies that $p_\ast$ is surjective.
\end{proof}
\end{obs}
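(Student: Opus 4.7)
The plan is to construct a continuous global section $\tau:\s^k\rightarrow \SO(k+1)$ of the projection $p$ for $k=3,7$; once such a $\tau$ exists, functoriality of $\pi_n$ gives $p_\ast\circ\tau_\ast=\id$ on $\pi_n(\s^k)$, so $p_\ast$ is surjective.

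First I would reinterpret what a section of $p$ really is. Since $p$ sends a matrix to its first column, the fiber over a point $a\in\s^k$ consists of all orthogonal matrices whose first column is $a$, equivalently all orthonormal frames of the orthogonal complement $a^\perp\cong T_a\s^k$. A continuous section of $p$ is therefore exactly the same data as a continuous global orthonormal frame of the tangent bundle $T\s^k$, i.e.\ a parallelization of $\s^k$. So the question reduces to the classical fact that $\s^3$ and $\s^7$ are parallelizable.

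I would make this explicit using the normed division algebras. For $k=3$, identify $\R^4$ with the quaternions $\mathbb{H}$ and $\s^3$ with the unit quaternions, and define $\tau(a)$ to be the matrix whose columns are $a,\,a\cdot i,\,a\cdot j,\,a\cdot k$. Left multiplication by a unit quaternion is a linear isometry of $\mathbb{H}$, so $\tau(a)\in \Or(4)$; by evaluating at $a=1$ (where $\tau(1)$ is the identity matrix) and invoking connectedness of $\s^3$, we conclude $\tau(a)\in\SO(4)$. By construction $p\circ\tau=\id$. An entirely analogous construction, using left multiplication by unit octonions in $\mathbb{O}$, produces a section for $k=7$; non-associativity of $\mathbb{O}$ is irrelevant since we only need the maps $x\mapsto a\cdot x$ to be linear isometries for each fixed $a$, which they are.

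I do not expect a genuine obstacle. The only verifications are that left multiplication by a unit element of $\mathbb{H}$ or $\mathbb{O}$ preserves the norm (standard, since both are normed algebras) and that the determinant is $+1$ (follows from continuity plus the value at $a=1$). Once $\tau$ is exhibited, the surjectivity of $p_\ast$ is immediate from $p\circ\tau=\id_{\s^k}$.
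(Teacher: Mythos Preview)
Your proof is correct and follows essentially the same approach as the paper: both argue that parallelizability of $\s^3$ and $\s^7$ is equivalent to the existence of a continuous section $\tau$ of $p$, and then deduce surjectivity of $p_\ast$ from $p\circ\tau=\id$. You simply supply more detail, giving the explicit quaternionic and octonionic constructions of the section where the paper just invokes the well-known fact.
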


This immediately proves Theorem \ref{thm_37}.

It turns out that there exists a different, quite large family of pairs $(n,k)$ for which $p_\ast$ is surjective:

\begin{thm}
For a given $m>0$, for any $k>m+1$ congruent to $-1$ modulo $a_{m+2}$ the morphism
\begin{equation*}
    p_\ast:\pi_{k+m}(\SO(k+1))\rightarrow \pi_{k+m}(\s^k)
\end{equation*}
is a surjection, where $a_{m+2}$ is the $m+2$-th Radon-Hurwitz number defined in Section \ref{introduction}.
\begin{proof}
It suffices to apply \cite[Corollary 5]{Mukai_1977} with $s:=m+2, n:=k+m+3, r:=k+m+1$.
\end{proof}
\end{thm}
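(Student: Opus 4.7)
The plan is to reduce the statement to a classical cross-section problem for Stiefel fibrations. Recall that surjectivity of the map $p_\ast : \pi_{k+m}(\SO(k+1))\to\pi_{k+m}(\s^k)$ induced by $p:\SO(k+1)\to\s^k$ is equivalent, via the long exact homotopy sequence of the fibration
\begin{equation*}
\SO(k)\hookrightarrow\SO(k+1)\xrightarrow{p}\s^k,
\end{equation*}
to the vanishing of the connecting homomorphism $\partial:\pi_{k+m}(\s^k)\to\pi_{k+m-1}(\SO(k))$. Equivalently, any map $\s^{k+m}\to\s^k$ must be liftable to $\SO(k+1)$, which in turn can be detected by the existence of a cross-section over a sufficiently high skeleton of $\s^k$ for the pullback of this fibration.

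My approach would be to embed the question into the larger tower of Stiefel fibrations. Writing $V_{n,r}=\SO(n)/\SO(n-r)$, the bundle $p$ is the projection $V_{k+1,1}\cong\s^k$ realized inside the chain $V_{N,r}\to V_{N,r-1}$ for large enough $N$. The cross-section problem for these Stiefel fibrations — and more generally, the question of when a sphere maps into a Stiefel manifold in a prescribed way — is governed classically by the Radon–Hurwitz numbers $a_p$, the obstruction being controlled by characteristic classes whose orders divide appropriate $a_p$. This is precisely the content of the line of work initiated by James, Adams and continued by Mukai.

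With this framework set up, the theorem becomes an instance of Mukai's Corollary 5 in \cite{Mukai_1977} upon the substitution $s:=m+2$, $n:=k+m+3$, $r:=k+m+1$; the hypothesis $k>m+1$ ensures the parameters lie in the admissible range, and the congruence $k\equiv -1\pmod{a_{m+2}}$ is exactly the divisibility assumption in Mukai's result after this translation. The conclusion of that corollary (existence of a certain cross-section / triviality of the obstruction) converts directly into vanishing of $\partial$, hence surjectivity of $p_\ast$ in degree $k+m$.

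The main obstacle I anticipate is bookkeeping: verifying that the indexing conventions in \cite{Mukai_1977} match our fibration $\SO(k+1)\to\s^k$ (the Stiefel manifold there is typically described as $V_{n,r}$ with a specific orientation), and tracking the shift between the degrees of the relevant homotopy groups, the stem $m$, and the parameter $s=m+2$ that governs which Radon–Hurwitz number appears. Once those identifications are made, the proof is a direct citation with no additional geometric or homotopical input required.
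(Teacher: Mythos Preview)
Your proposal is correct and lands on exactly the same proof as the paper: a direct citation of \cite[Corollary 5]{Mukai_1977} with the identical substitutions $s:=m+2$, $n:=k+m+3$, $r:=k+m+1$. The surrounding discussion you give (the long exact sequence of $\SO(k)\hookrightarrow\SO(k+1)\to\s^k$ and the reinterpretation via Stiefel cross-sections) is reasonable motivation but is not needed for the argument itself, which the paper presents as a one-line citation.
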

This proves Theorem \ref{thm_codim}.

We now proceed to prove our result regarding the $J$-homomorphism, it is based on the following lemma:
\begin{lem}\label{image_of_J}
Suppose that $U$ is a Zariski open neighbourhood of $\s^n$ in $\R^{n+1}$ and $f:U\rightarrow \SO(k)$ is a regular mapping taking $e$ to the identity matrix. Then $J([f\vert_{\s^n}])\in \pi_{n+k}^\alg(\s^k)$, where $J:\pi_n(\SO(k))\rightarrow \pi_{n+k}(\s^k)$ is the $J$-homomorphism.
\begin{proof}
Let $Q$ be a denominator of $f$, i.e. a regular function on $\R^{n+1}$, positive on $\s^n$, such that $f$ can be written as 
\begin{equation*}
    f=\frac{P}{Q}
\end{equation*}
where $P:\R^{n+1}\rightarrow \R^{k^2}$ is a regular mapping. Let the coordinates on $\R^{k+n+1}$ be denoted by $(x_1,\dots,x_{n+1},y_1,\dots,y_k)=(x,y)$. Consider the rational mapping
\begin{gather*}
    g: \R^{k+n+1} \dashrightarrow \R^{k+1}\\
    (x,y)\mapsto \left(\frac{Q(x)^2-||y||^2}{Q(x)^2+||y||^2},\frac{2Q(x)}{Q(x)^2+||y||^2}(f(x))^\top\cdot y \right)
\end{gather*}
It is clear that $g$ extends in a regular way to a neighbourhood of $\s^{n+k}$ in $\R^{n+k+1}$. Moreover it maps $\s^{n+k}$ into $\s^k$, as for $(x,y)\in\s^{n+k}$ we have:
\begin{equation*}
    ||g(x,y)||^2=\frac{(Q(x)^2-||y||^2)^2+4Q(x)^2||y||^2}{(Q(x)^2+||y||^2)^2}=1
\end{equation*}
We claim that $\north\in\s^k$ is a regular value of $g\vert_{\s^{n+k}}$. To see that note that the preimage of $e$ through $g\vert_{\s^{n+k}}$ is the $n$-dimensional sphere $\s^n=\{(x,0)\in\s^{n+k}\}$. Now we check that the partial derivative of $g$ with respect to a normal vector $(0,y)$ to $\s^n$ at a point $(x,0)\in\s^n$ is non-zero:
\begin{equation*}
    d g_{(x,0)}(0,y)=\left(0,\frac{2}{Q(x)}(f(x))^\top\cdot y\right)
\end{equation*}
Moreover, the framing induced by $g$ on $\s^n$ at the point $(x,0)$ is up to scalar multiplication given by column vectors of $f$, which shows that $g\vert_{\s^{n+k}}$ represents the image of $[f\vert_{\s^n}]$ in the $J$-homomorphism (see \cite[Chapter IX, (6.3)]{Kosinski} for the definition of the $J$-homorphism in the language of framed submanifolds).
\end{proof}
\end{lem}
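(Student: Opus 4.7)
The plan is to realise the class $J([f|_{\s^n}])$ as the Pontryagin--Thom class of an explicit regular map from a Zariski open neighbourhood of $\s^{n+k}$ to $\s^k$. The framed-cobordism description of the $J$-homomorphism (see \cite[Chapter IX, (6.3)]{Kosinski}) tells us that $J([f|_{\s^n}])$ is represented by the embedded submanifold $\s^n\times\{0\}\subset\R^{n+1}\times\R^k\cong\R^{n+k+1}$, sitting inside $\s^{n+k}\setminus\{\north\}\cong\R^{n+k}$, with framing at $(x,0)$ given by the columns of $f(x)$ viewed in the $\{0\}\times\R^k$ factor. It is therefore enough to construct a regular $g$ on a Zariski open neighbourhood $V$ of $\s^{n+k}$, sending $V\cap\s^{n+k}$ into $\s^k$, so that $\north$ is a regular value of $g|_{\s^{n+k}}$ with preimage $\s^n\times\{0\}$, and so that the induced Pontryagin--Thom framing agrees with the above up to a positive scalar.

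Write $f=P/Q$ with $P:\R^{n+1}\to\R^{k\times k}$ regular and $Q$ a regular function on $\R^{n+1}$ positive on $\s^n$. The relation $f^\top f=I$ on $U$ reads $P^\top P=Q^2 I$; since this polynomial identity holds on the nonempty Zariski open $U$, it holds on all of $\R^{n+1}$. Heuristically, $g$ should apply inverse stereographic projection to the ``twisted'' vector $f(x)^\top y\in\R^k$; clearing denominators, I take
\begin{equation*}
g(x,y):=\left(\frac{Q(x)^2-\|y\|^2}{Q(x)^2+\|y\|^2},\;\frac{2P(x)^\top y}{Q(x)^2+\|y\|^2}\right).
\end{equation*}
This is regular on $V:=\{(x,y):Q(x)^2+\|y\|^2\neq 0\}$, and $V\supset\s^{n+k}$: if $y\neq 0$ the denominator is positive, while if $y=0$ then $(x,0)\in\s^{n+k}$ forces $x\in\s^n\subset U$, so $Q(x)>0$.

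The remaining verifications are routine. First, $\|g(x,y)\|=1$ for $(x,y)\in\s^{n+k}$ follows by direct algebra from the identities $\|P^\top y\|^2=Q^2\|y\|^2$ and $\|x\|^2+\|y\|^2=1$. Second, $g(x,y)=\north$ iff $y=0$, so $g^{-1}(\north)\cap\s^{n+k}=\s^n\times\{0\}$. Third, the differential at $(x,0)$ along a normal direction $(0,v)\in\{0\}\times\R^k$ evaluates to $(0,2Q(x)^{-1}P(x)^\top v)=(0,2f(x)^\top v)$, which is a linear isomorphism onto $T_\north\s^k\cong\{0\}\times\R^k$; hence $\north$ is a regular value, and the framing induced on $\s^n\times\{0\}$ is a positive rescaling of the columns of $f(x)$. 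Up to this rescaling the framed submanifold is exactly the one representing $J([f|_{\s^n}])$, so $[g|_{\s^{n+k}}]=J([f|_{\s^n}])\in\pi_{n+k}^\alg(\s^k)$.

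The main obstacle I expect is bookkeeping: ensuring that the sign and orientation conventions in the framed-cobordism definition of the $J$-homomorphism line up with the framing induced by $g$, so that the computed framing really is the $J$-framing rather than its inverse or a transpose. Once the convention is fixed, every step reduces to a direct substitution relying only on the polynomial identity $P^\top P=Q^2 I$ and the sphere equation $\|x\|^2+\|y\|^2=1$.
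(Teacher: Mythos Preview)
Your proposal is correct and follows essentially the same approach as the paper: the explicit map $g$ you write down is identical to the paper's (after substituting $f=P/Q$), and your verifications that $g$ lands in $\s^k$, that $\north$ is a regular value with preimage $\s^n\times\{0\}$, and that the induced framing is a positive rescaling of the columns of $f$ are exactly the paper's checks. Your added remark that $P^\top P=Q^2 I$ holds as a polynomial identity is a nice explicit justification the paper leaves implicit.
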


\begin{proof}[Proof of Theorem \ref{thm_J}]
Let us begin by proving the second part. Let $f:\s^n\rightarrow\SO(k)$, where $n\equiv 2,4,5$ or $6\mod 8$. We get from Bott periodicity that if $m$ is large enough then $i\circ f$ is homotopically trivial, where $i:\SO(k)\rightarrow\SO(m)$ is the standard inclusion. Consider $g:=i\circ f\circ \psi$, where $\psi:\R^{n+1}\backslash \{0\}\rightarrow \s^{n}$ is the radial projection. As $g$ is also homotopically trivial, we get from Theorem \ref{kucharz-bochnak} that it can be approximated by a regular mapping $\widetilde{g}:\R^{n+1}\backslash \{0\}\rightarrow \SO(m)$. If the approximation is close enough, then $h:=r_{k+1}\circ\dots\circ r_m\circ \widetilde{g}$ is a regular mapping defined on a neighbourhood of the sphere, such that $h\vert_{\s^n}$ is close to $f$. Hence Lemma \ref{image_of_J} applies here, proving that $J([f])\in \pi_{n+k}^\alg(\s^k)$.

Let us now proceed to the first part. For $n\equiv 2,4,5$ or $6\mod 8$ we have just proven something stronger. For $n\equiv 0$ or $1\mod 8$, the proof follows the same lines as the in the preceding paragraph, because from Bott periodicity we get that $i\circ f$ is homotopically trivial if $m$ is large enough and $[f]\in 2\pi_n(\SO(k))$. 

The remaining case is $n\equiv 3$ or $7 \mod 8$, it is treated in a slightly different way than the former one. Since here $n$ is odd, there exists a regular mapping $\phi:\R^{n+1}\backslash \{0\}\rightarrow \s^n$ such that $\textnormal{deg}(\phi\vert_{\s^n})=2$ (as in the proof of Theorem \ref{k-odd}). If now $f:\s^n\rightarrow\SO(k)$ is regular, then $f\circ \phi$ is a regular mapping from $\R^{n+1}\backslash \{0\}$ into $\SO(k)$ representing the class $2[f]$, so again the proof is finished by an application of Lemma \ref{image_of_J}.
\end{proof}

\begin{ack}
The author was partially supported by the National Science Centre (Poland) under grant number 2022/47/B/ST1/00211.
\end{ack}

\bibliographystyle{plain}
\bibliography{references}
\nocite{reaL_algebraic_geometry}
\end{document}